\numberwithin{equation}{section}
\def\p{\partial}
\def\o{\overline}
\def\b{\bar}
\def\mb{\mathbb}
\def\mc{\mathcal}
\def\n{\nabla}
\theoremstyle{plain}
\newtheorem{thm}{Theorem}[section]
\newtheorem{lemma}[thm]{Lemma}
\theoremstyle{definition}
\theoremstyle{definition}
\newtheorem{defn}[thm]{Definition}
\newcommand{\comment}[1]{}
\begin{document}

\title[
  The asymptotics of the $L^2$-curvature 
the  second variation of analytic torsion]{
  The asymptotics of the $L^2$-curvature 
  and the  second variation of analytic torsion on Teichm\"uller space
}
\makeatletter
\makeatother
\author{Xueyuan Wan}
\author{Genkai Zhang}

\address{Xueyuan Wan: Mathematical Sciences, Chalmers University of Technology, 41296 Gothenburg, Sweden}
\email{xwan@chalmers.se}

\address{Genkai Zhang: Mathematical Sciences, Chalmers University of Technology, 41296 Gothenburg, Sweden}
\email{genkai@chalmers.se}

\begin{abstract}
We consider
the relative canonical line bundle  $K_{\mc{X}/\mc{T}}$ and
a relatively ample line bundle $(L, e^{-\phi})$ 
over the total space $
\mc{X}\to \mc{T}$  of
fibration over the Teichm\"uller space 
by Riemann surfaces.
We consider the
case when the induced metric $\sqrt{-1}\p\b{\p}\phi|_{\mc{X}_y}$ on $\mc{X}_y$ has constant scalar curvature and  we obtain the curvature asymptotics of $L^2$-metric and Quillen metric of the direct image bundle $E^k=\pi_*(L^k+K_{\mc{X}/\mc{T}})$. As a consequence we prove that the second variation of analytic torsion $\tau_k(\b{\p})$ satisfies
\begin{align*}
 \p\b{\p}\log\tau_k(\b{\p})=o(k^{-l})
 \end{align*} 
 at the point $y\in\mc{T}$ for any $l\geq 0$ as $k\to\infty$. 
 \end{abstract}
\maketitle
\tableofcontents

\section{Introduction}

In the present paper we shall study the asymptotics of the second variation 
of analytic torsions for higher powers of a line bundle
for a family of Riemann surfaces. Consider first
a general holomorphic fibration  $\pi:\mc{X}\to \mathcal T$  
with fibers being
 $n$-dimensional compact manifolds, 
a relative ample line bundle $L$  
and  the relative canonical line bundle $K_{\mc{X}/\mathcal T}$
over $\mc{X}$.
Let $E^k=\pi_*(L^k+K_{\mc{X}/\mathcal T})$
be the  direct image bundle 
over $\mathcal T$. The holomorphic vector bundle $E^k$ 
is equipped with two natural metrics, 
the $L^2$-metric and Quillen metric (see Section \ref{sec1}). 
 In a recent preprint \cite{Wan} we 
prove that the second variation of analytic torsion satisfies
 \begin{align}\label{0.01}
 	\p\b{\p}\log\tau_k(\b{\p})=o(k^{n-1});
 \end{align}
 see \cite[Corollary 1.3]{Wan}. 
This is done by comparing the curvatures of these two metrics,
expanding the resolvent operator $(\Delta + k)^{-1}$ and by using Tian-Yau-Zelditch expansion of Bergman kernels on fibers. The
same result can also be deduced from a recent paper of Finski \cite{Finski}. 
It is then natural and interesting 
to ask whether the coefficients of the orders lower than $k^{n-1}$ are zero. One important
case of the above consideration is the fibration $\mathcal X$ over
 Teichm\"u{}ller space $\mathcal T$ by Riemann surfaces,
also called Teichm\"uller curve,  \cite{Ahlfors}.
It is now well-known
that the Bergman kernel expansion for line bundles $\mathcal L^k$
over Riemann surfaces  has only two terms, namly the linear term
 $c_1k$ and constant term $c_0$, the remaining term being 
exponentially decaying \cite{Liu}. We may expect
that the variation of the analytic torsion is also
decaying exponentially for $k\to \infty$. Indeed
when the line bundle $\mathcal L$  is the relative canonical
line bunde the exponential decaying property  is proved in \cite{Zhang};
the analytic torsion in this case is expressed in terms
of the Selberg zeta function \cite{Sarnak}
and can be studied by explicit computations.
In the present paper we will consider a 
general line bundle $\mathcal L$ over the 
the Teichm\"u{}ller curve and prove (\ref{0.01}) holds for any order.
In this case there is no explicit formula for the analytic torsion.
We describe below  more precisely our results and their proofs.

Let $L$ be a relative ample line bundle,  i.e. there exists a metric $\phi$ such that its curvature $$\sqrt{-1}\p\b{\p}\phi|_{\mc{X}_y}=\sqrt{-1}\phi_{v\b{v}}dv\wedge d\b{v}>0$$ on each fiber $\mc{X}_y:=\pi^{-1}(y)$ for any $y\in \mc{T}$, and let $K_{\mc{X}/\mc{T}}:=K_{\mc{X}}-\pi^*K_{\mc{T}}$ denote the relative canonical line bundle. We will consider the following direct image bundle 
\begin{align}
E^k=\pi_*(L^k+K_{\mc{X}/\mc{T}})
\end{align}
over the Teichm\"uller space $\mc{T}$.

Let $D_y=\b{\p}_y+\b{\p}^*_y$  be the Dirac operator 
acting on
$A^{0, 1}(\mc{X}_y, L^{k}+K_{\mc{X}/\mc{T}})$
of $(0, 1)$-forms.
 For any $b>0$, denote by $D^{(b,+\infty)}$ the restriction of $D$ on the sum of eigenspaces of $A^{0,1}(\mc{X}_y,  L^{k}+K_{\mc{X}/\mc{T}})$ for eigenvalues in $(b,+\infty)$. Then the (Ray-Singer) analytic torsion is defined by
\begin{equation*}
\tau_k(\b{\p})=\tau_k(\b{\p}^{(b,+\infty)})=\left(\det ((D^{(b,+\infty)})^2)\right)^{1/2}\end{equation*}
and is a positive smooth function on $\mc{T}$.
Here $b$ is a constant less than all positive eigenvalues of $D$ (see  
Definition \ref{torsiondef}).

The analytic torsion and its second variation on Teichm\"uller space have been studied in details by \cite[Theorem 3.10 and Theorem 5.8]{Fay}. In this paper, we prove 
\begin{thm}\label{main theorem}
Let $\pi: \mc{X}\to \mc{T}$ be the holomorphic fibration over Teichm\"uller space $\mc{T}$. Suppose that the induced metric $\sqrt{-1}\p\b{\p}\phi|_{\mc{X}_y}$ on $\mc{X}_y$ has constant scalar curvature. Then
	 \begin{align}\label{second variation}
 \p\b{\p}\log\tau_k(\b{\p})=o(k^{-l})
 \end{align} 
 at any  point $y\in \mc{T}$ for any $l\geq 0$ as $k\to\infty$. 
\end{thm}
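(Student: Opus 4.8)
The plan is to realize $\p\b{\p}\log\tau_k(\b{\p})$ as the difference of the first Chern forms of the $L^2$-metric and the Quillen metric on the determinant line bundle $\lambda:=\det E^k$, and then to prove that this difference has a trivial asymptotic expansion, i.e.\ that all of its coefficients in powers of $k$ vanish. First I would recall, as in \cite{Wan}, that for $k$ large the higher cohomology $H^1(\mc{X}_y,L^k+K_{\mc{X}/\mc{T}})$ vanishes by Kodaira--Serre, so that $\lambda=\det E^k$ and the $L^2$-metric induces the determinant metric $h_{L^2}$; by the definition of the Quillen metric one has $h_Q=\tau_k^{\pm2}h_{L^2}$ on $\lambda$, whence, up to the normalizing constant,
\begin{align*}
\p\b{\p}\log\tau_k(\b{\p})=\pm\tfrac12\big(\operatorname{Tr}\Theta^{E^k}_{L^2}-c_1(\lambda,h_Q)\big),
\end{align*}
where $\operatorname{Tr}\Theta^{E^k}_{L^2}=c_1(\lambda,h_{L^2})$ is the trace of the curvature of the $L^2$-metric.

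Next I would treat the two sides separately. On the Quillen side, the Bismut--Gillet--Soul\'e curvature formula identifies $c_1(\lambda,h_Q)$ with the degree-$(1,1)$ part of the fiber integral $\int_{\mc{X}/\mc{T}}\mathrm{Td}(T_{\mc{X}/\mc{T}})\,\mathrm{ch}(L^k+K_{\mc{X}/\mc{T}})$. Because the fibers are Riemann surfaces, setting $a=c_1(L)$ and $c=c_1(K_{\mc{X}/\mc{T}})$ the degree-four integrand is exactly $\tfrac12k^2a^2+\tfrac12k\,ac+\tfrac1{12}c^2$, so $c_1(\lambda,h_Q)$ is an \emph{exact} polynomial in $k$ of degree two, with no remainder. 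On the $L^2$ side I would use Berndtsson's curvature formula for $\pi_*(K_{\mc{X}/\mc{T}}+L^k)$ and take the fiberwise trace: $\operatorname{Tr}\Theta^{E^k}_{L^2}$ splits into a curvature term $\int_{\mc{X}_y}c(k\phi)\,B_k\,dV$, the integral of the geodesic curvature against the fiberwise Bergman density $B_k$, minus a second fundamental form term built from the fiber Green operator applied to Kodaira--Spencer representatives. Expanding $B_k$ by Tian--Yau--Zelditch and the resolvent $(\Delta+k)^{-1}$ as in \cite{Wan} then yields an asymptotic expansion $\operatorname{Tr}\Theta^{E^k}_{L^2}\sim\sum_{j\ge0}P_jk^{2-j}$, each $P_j$ being a fiber integral of local curvature quantities.

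It then remains to compare the two expansions. The coefficients through order $k^0$ cancel by \cite[Corollary 1.3]{Wan}, which is exactly the statement \eqref{0.01} for $n=1$; the heart of the matter is to show that the coefficient of $k^{-m}$ vanishes for every $m\ge1$. This is where the constant scalar curvature hypothesis is decisive: the Tian--Yau--Zelditch coefficients and the subleading terms in the resolvent expansion are universal polynomials in the fiberwise curvature and its covariant derivatives, and when $\sqrt{-1}\p\b{\p}\phi|_{\mc{X}_y}$ has constant scalar curvature all covariant derivatives of the curvature vanish while the off-diagonal Bergman kernel is exponentially small (\cite{Liu}). Under this rigidity the only surviving contributions are constants forced to reproduce the Bismut--Gillet--Soul\'e coefficients order by order, so that $P_j$ matches the Quillen coefficient for every $j$ and the difference is $O(k^{-\infty})$ up to an exponentially small term.

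The main obstacle I anticipate is precisely this order-by-order cancellation below $k^0$: unlike the leading-order comparison of \cite{Wan}, it requires a bookkeeping of the full resolvent expansion $(\Delta+k)^{-1}$ in which the constant scalar curvature condition is used to discard every derivative-of-curvature term, and it must be carried out uniformly enough to upgrade the pointwise $O(k^{-\infty})$ estimate to the claimed $o(k^{-l})$ bound on the $(1,1)$-form $\p\b{\p}\log\tau_k(\b{\p})$ at $y$. A secondary technical point is to confirm that the exponentially decaying remainder in the Bergman kernel expansion on a constant-curvature surface does not, after the fiber integration and the base derivatives $\p\b{\p}$, contribute anything of polynomial order.
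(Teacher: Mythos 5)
Your overall skeleton matches the paper's: the anomaly identity expressing $\p\b{\p}\log\tau_k$ through $-c_1(\lambda,\|\bullet\|_Q)-c_1(E^k,\|\bullet\|_k)$, the observation that the Bismut--Gillet--Soul\'e side is an exact degree-two polynomial in $k$, and the use of Berndtsson's formula together with the two-term Tian--Yau--Zelditch expansion of \cite{Liu} on the $L^2$ side. The gap is in your treatment of the resolvent term $k\sum_j\langle(k+\Delta')^{-1}i_{\mu}u_j,i_{\mu}u_j\rangle$, which is where essentially all of the work lies. Your proposed mechanism --- that constant scalar curvature kills ``every derivative-of-curvature term'' so that each coefficient below order $k^0$ vanishes individually --- does not apply here: the quantities produced by expanding the resolvent are not covariant derivatives of the curvature but iterated covariant derivatives of the harmonic Kodaira--Spencer representative $\mu$, and these are \emph{nonzero}; the paper computes $\|\o{\n}^n\mu\|^2=(-\rho)^n\frac{n!(n+3)!}{3\cdot 2^{n+1}}\|\mu\|^2$ exactly (Lemma \ref{2.5}). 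Note also the factorial growth of these norms, so a naive Neumann expansion of $(k+\Delta')^{-1}$ does not give a termwise-controlled asymptotic series. Finally, the claim that the surviving constants are ``forced to reproduce the Bismut--Gillet--Soul\'e coefficients order by order'' is circular: the only a priori relation between the two expansions is the anomaly formula itself, which is precisely what the theorem is meant to exploit.

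What the paper actually does at this point is an exact resummation rather than a term-by-term vanishing argument. Constant curvature makes the commutators $[\n_{\b{v}},\n_v]$ constant, which yields a closed recursion $\square_{n}\square_{1}=\square_{n+1}+A_n\square_{n}$ for the iterated operators $\square_n$, hence an exact finite inversion $(k+\Delta')^{-1}(1-a_N\square_{N+1})=\sum_{n=0}^{N}a_n\square_{n}$ with explicit coefficients (Lemma \ref{2.2}); combining this with the exact values of $\|\o{\n}^n\mu\|^2$ and a Gamma-function telescoping identity (Lemma \ref{lemma2}) collapses the whole sum to $\frac{3k-\rho}{12\pi}\|\mu\|^2+O(k^{-N})$, the truncation error being controlled by Cauchy--Schwarz and the self-adjointness of $\square_{N+1}$ (Lemma \ref{lemma3}). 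Without some such resummation device your order-by-order bookkeeping would encounter nonzero contributions at every negative power of $k$ with no visible cancellation. (Your secondary worry about differentiating the exponentially small Bergman remainder is moot: both curvature formulas are exact, so the expansion is performed on the curvature itself and no base derivatives ever hit a remainder term.)
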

Here the asymptotic (\ref{second variation}) denotes $(\p\b{\p}\log\tau_k(\b{\p}))(\zeta,\o{\zeta})=o(k^{-l})$ for any vector $\zeta\in T_y\mc{T}$. 
%

Recall Theorem \ref{Bis} (see below) that the
Quillen metric on $\mathcal T$
is defined by the analytic torsion for the the fibration.
In the papers \cite{Bismut1,Bismut2, Bismut3}, 
J.-M. Bismut, H. Gillet and C. Soul\'{e} computed the curvature of Quillen metric for a locally K\"ahler family and obtained the differential form version of Grothendieck-Riemann-Roch Theorem. Moreover, 
 they proved that as a holomorphic bundle,
$$\lambda_y\cong \bigotimes_{i\geq 0}\det H^i(\mc{X}_y, L^{k}+K_{\mc{X}/\mc{T}})^{(-1)^{i+1}}.$$ 
 By Kodaira vanishing theorem, $H^i(\mc{X}_y,
 K_{\mc{X}/\mc{T}}+L^k)=0$ for all $i\geq 1$, thus $$\lambda\cong
 (\det E^k)^{-1}.$$
Let  $\det \|\bullet\|_k$ denote  the natural induced $L^2$-metric on line bundle $\lambda^{-1}=\det E^k$.  Then it follows from
(\ref{metric consist})
that
\begin{align}\label{0.10}\det \|\bullet\|_k^2=((|\bullet|^b)^2)^*\end{align}
for $b>0$ a sufficiently small constant,  
where  $((|\bullet|^b)^2)^*$ denotes the dual of the $L^2$-metric $(|\bullet|^b)^2$.

The Chern forms of the $L^2$-metric
has been studied intensively and 
 Berndtsson \cite{Bern2, Bern3, Bern4}
has found the curvature of the vector bundle,
 \begin{align*}
\langle\sqrt{-1}\Theta^{E_k}u,u\rangle=\int_{\mc{X}/M}kc(\phi)|u|^2e^{-k\phi}+k\langle (\Delta'+k)^{-1} i_{\mu_{\alpha}}u, i_{\mu_{\beta}}u\rangle\sqrt{-1}dz^{\alpha}\wedge d\b{z}^{\beta},	
\end{align*}
where the definitions of $c(\phi)$, $\mu_{\alpha}$ and $\Delta'$ are
given  in Theorem \ref{BF1}.
To prove Theorem \ref{main theorem} we shall find the expansion of
 the  curvature of $E^k$. 
\begin{thm}\label{main theorem 1}
For any vector $\zeta=\zeta^{\alpha}\frac{\p}{\p z^{\alpha}}\in T_y\mc{T}$, we have 
	\begin{align}\label{0.7}
\begin{split}
	& -\sqrt{-1}c_1(E^k,\|\bullet\|_k)(\zeta,\b{\zeta})=\frac{6k^2-6k\rho+\rho^2}{24\pi^2(-\rho)}\|\mu\|^2+o(k^{-l})
\end{split}	
\end{align}
for any $l\geq 0$, where $\mu=-\p_{\b{v}}(\phi_{\alpha\b{v}}(\phi_{v\b{v}})^{-1})\zeta^{\alpha} d\b{v}\otimes \frac{\p}{\p v}$ and $$\|\mu\|^2=\int_{\mc{X}_y}|\p_{\b{v}}(\phi_{\alpha\b{v}}(\phi_{v\b{v}})^{-1})\zeta^{\alpha}|^2\sqrt{-1}\p\b{\p}\phi.$$
\end{thm}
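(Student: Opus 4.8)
The plan is to obtain the expansion by taking the fiberwise trace of Berndtsson's curvature formula (Theorem \ref{BF1}) over an $L^2$-orthonormal basis $\{u_i\}$ of $H^0(\mc{X}_y,L^k+K_{\mc{X}/\mc{T}})$. Writing $\mu=\mu_\alpha\zeta^\alpha$ and contracting the formula with $\zeta\otimes\b\zeta$, one gets, up to the normalizing constant,
\begin{align*}
-\sqrt{-1}c_1(E^k,\|\bullet\|_k)(\zeta,\b\zeta)=k\int_{\mc{X}_y}c(\phi)(\zeta,\b\zeta)\,B_k\,\sqrt{-1}\p\b{\p}\phi+k\sum_i\langle(\Delta'+k)^{-1}i_\mu u_i,i_\mu u_i\rangle,
\end{align*}
where $B_k=\sum_i|u_i|^2e^{-k\phi}$ is the Bergman kernel on the diagonal. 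I would analyse the \emph{density term} (the first integral) and the \emph{resolvent term} (the second sum) separately, and then collect everything as a polynomial in $k$ modulo an $o(k^{-l})$ error.

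For the density term, I would insert the Tian--Yau--Zelditch expansion of $B_k$ on the fiber. The crucial input specific to Riemann surfaces is that this expansion has only the two terms $b_1k+b_0$ followed by an $O(k^{-\infty})$ remainder \cite{Liu}, so no infinite polynomial tail survives. The constant scalar curvature hypothesis forces the coefficients $b_1,b_0$ (which involve $\rho$) to be constant along $\mc{X}_y$, so they pull out of the fiber integral. A fiber-integration/integration-by-parts identity for the geodesic curvature then rewrites $\int_{\mc{X}_y}c(\phi)(\zeta,\b\zeta)\,\sqrt{-1}\p\b{\p}\phi$ in terms of $\|\mu\|^2$; combined with the factor $b_1k$ this produces the leading $k^2$ contribution (and part of the lower-order terms) with the stated $\rho$-dependent coefficient.

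The resolvent term is the crux. The key observation is that, since $u_i$ is $\b\p$-holomorphic and $\mu_\alpha=-\p_{\b v}(\phi_{\alpha\b v}(\phi_{v\b v})^{-1})\,d\b v\otimes\p_v$ is $\b\p$ of the horizontal vector field $a_\alpha\p_v$ with $a_\alpha=\phi_{\alpha\b v}(\phi_{v\b v})^{-1}$, the contraction $i_\mu u_i$ is $\b\p$-exact: $i_\mu u_i=\b\p(Su_i)$ with $Su_i=-a_\alpha\zeta^\alpha u_{i,v}$ a section of $L^k$. Using the commutation $\b\p(\Delta_0+k)^{-1}=(\Delta'+k)^{-1}\b\p$, where $\Delta_0$ is the $\b\p$-Laplacian on sections of $L^k$, the identity $\Delta_0(\Delta_0+k)^{-1}=\mathrm{Id}-k(\Delta_0+k)^{-1}$ gives
\begin{align*}
k\langle(\Delta'+k)^{-1}i_\mu u_i,i_\mu u_i\rangle=\langle\Delta_0 Su_i,Su_i\rangle-\tfrac{1}{k}\langle\Delta_0^2 Su_i,Su_i\rangle+\cdots,
\end{align*}
whose leading term is $\langle\Delta_0 Su_i,Su_i\rangle=\|i_\mu u_i\|^2=\int_{\mc{X}_y}|\p_{\b v}(a_\alpha\zeta^\alpha)|^2|u_{i,v}|^2e^{-k\phi}$. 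Summing over $i$ reconstitutes the Bergman kernel, so this leading piece equals $\int_{\mc{X}_y}|\p_{\b v}(a_\alpha\zeta^\alpha)|^2\,B_k\,\sqrt{-1}\p\b{\p}\phi\sim b_1k\,\|\mu\|^2$, the desired $k^1$ term; the subleading pieces are controlled by expanding $(\Delta_0+k)^{-1}$ against the uniform spectral gap $\mathrm{Spec}(\Delta_0)\setminus\{0\}\geq ck$ valid for large $k$ by Bochner--Kodaira, and contribute to the $k^0$ term.

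Finally I would add the two contributions and read off the coefficients of $k^2,k^1,k^0$, matching $\frac{6k^2-6k\rho+\rho^2}{24\pi^2(-\rho)}\|\mu\|^2$; the remainder is $o(k^{-l})$ for every $l$ precisely because (i) the Bergman expansion on a Riemann surface terminates with an exponentially small tail, and (ii) the resolvent corrections are dominated by the spectral gap, so there is no surviving polynomial remainder. The main obstacle is exactly this step in the resolvent term: carrying out the $\b\p$-exact reduction together with the off-diagonal decay estimates needed to justify the resolvent expansion term by term, and verifying that all corrections beyond the explicit polynomial genuinely collapse to $o(k^{-l})$ for all $l$ rather than forming an asymptotic series --- a feature that holds here only because of the special two-term Bergman expansion and the uniform gap in dimension one.
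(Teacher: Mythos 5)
Your decomposition into the ``density term'' and the ``resolvent term'' is exactly the paper's starting point, and your treatment of the density term (two--term Tian--Yau--Zelditch expansion with exponentially small tail, plus the fiber--integration identity $(-\sqrt{-1})\int_{\mc{X}_y}c(\phi)\omega\,(\zeta,\b\zeta)=\frac{1}{-\rho}\|\mu\|^2$) matches (\ref{2.6})--(\ref{2.11}). The gap is in the resolvent term, which you correctly flag as the crux. Your reduction $i_\mu u=\b\p(Su)$ is fine, but the subsequent expansion $k\Delta_0(\Delta_0+k)^{-1}=\Delta_0-\tfrac1k\Delta_0^2+\cdots$ treats $\Delta_0/k$ as small, while the spectral gap you invoke says precisely the opposite: on $(\ker\Delta_0)^\perp$, where $P^\perp Su_i$ lives, Bochner--Kodaira gives $\Delta_0\gtrsim k$. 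Consequently every term $\tfrac{1}{k^{n-1}}\sum_i\langle\Delta_0^nSu_i,Su_i\rangle$ of your series is of order $k$, the truncation after the first term is invalid, and the corrections do \emph{not} merely ``contribute to the $k^0$ term.'' This is visible already at leading order: your scheme predicts the resolvent term is $\sim\frac{k}{2\pi}\|\mu\|^2$, whereas the correct value, read off from (\ref{2.22}), is $\frac{3k-\rho}{12\pi}\|\mu\|^2=\frac{k}{4\pi}\|\mu\|^2+\cdots$ --- off by a factor of $2$, reflecting that $(k+\Delta')^{-1}\approx\frac{1}{2k-\rho}$ rather than $\frac1k$ on these states.

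More fundamentally, the assertion that ``all corrections beyond the explicit polynomial genuinely collapse to $o(k^{-l})$ because of the spectral gap'' is exactly what requires proof, and it is false term by term: the $n$-th order contribution $\sum_j\langle\square_n i_\mu u_j,i_\mu u_j\rangle$ is nonzero for every $n$. The paper's mechanism is entirely different and has no counterpart in your sketch: (i) an \emph{exact} finite inversion $(k+\Delta')^{-1}(1-a_N\square_{N+1})=\sum_{n=0}^Na_n\square_n$ built from the recursion $\square_n\square_1=\square_{n+1}+A_n\square_n$ (Lemmas \ref{lemma3}--\ref{2.2}), with the remainder controlled via the self-adjointness of $\square_{N+1}$; (ii) the closed-form evaluation $\|\o{\n}^n\mu\|^2=(-\rho)^n\frac{n!(n+3)!}{3\cdot2^{n+1}}\|\mu\|^2$ (Lemma \ref{2.5}), which rests on the constant-curvature identity $(\p_v-\rho\phi_v)\mu=0$ --- a stronger structural fact than the $\b\p$-exactness you use; and (iii) the Gamma-function summation of Lemma \ref{lemma2}, which exhibits the exact cancellation of all the a priori nonzero contributions at orders $k^{-1},k^{-2},\dots$. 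Without (ii) and (iii) one only gets an asymptotic series with unknown coefficients at negative powers of $k$, not the $o(k^{-l})$ statement of the theorem. So the proposal, as written, would fail at the step you yourself identify as the main obstacle.
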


The Quillen metric $\|\bullet\|_Q$ on the determinant line $\lambda$ (see Definition \ref{line bundle}) is patched by the $L^2$-metric $|\bullet|^b$ on $\lambda^b$ (see (\ref{lambdab})) and the analytic torsion $\tau_k(\b{\p})$, i.e.
\begin{align}\label{0.9}
	\|\bullet\|^b=|\bullet|^b\tau_k(\b{\p}),
\end{align}
where $b>0$ is a sufficiently small constant.
From \cite[Proposition 3.9]{Wan}, we obtain the curvature of Quillen metric.
\begin{thm}\label{main theorem 2} For $\zeta\in T_y\mc{T}$, we have
	\begin{align}
(\sqrt{-1}c_1(\lambda,\|\bullet\|_Q))(\zeta,\b{\zeta})=\frac{6k^2-6k\rho+\rho^2}{24\pi^2(-\rho)}\|\mu\|^2.
\end{align}
\end{thm}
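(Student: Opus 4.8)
The plan is to obtain the Quillen curvature \emph{exactly} from the Bismut--Gillet--Soul\'e curvature formula and then to evaluate the resulting fiber integral using the constant scalar curvature hypothesis. By \cite[Proposition 3.9]{Wan} the first Chern form of the Quillen metric is, up to the normalization fixed there, the degree-two component of a fiber integral of a characteristic form,
\begin{align*}
\sqrt{-1}c_1(\lambda,\|\bullet\|_Q)=\int_{\mc{X}/\mc{T}}\mathrm{Td}(T_{\mc{X}/\mc{T}})\,\mathrm{ch}(L^k+K_{\mc{X}/\mc{T}});
\end{align*}
the essential feature, in contrast with the $L^2$-curvature of Theorem \ref{main theorem 1}, is that this identity is \emph{exact} and carries no $o(k^{-l})$ error. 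Writing $\omega_L=\sqrt{-1}\p\b{\p}\phi=c_1(L)$ and $c=c_1(T_{\mc{X}/\mc{T}})$, so that $c_1(K_{\mc{X}/\mc{T}})=-c$, I would expand
\begin{align*}
\mathrm{Td}(T_{\mc{X}/\mc{T}})=1+\tfrac{1}{2}c+\tfrac{1}{12}c^2+\cdots,\qquad \mathrm{ch}(L^k+K_{\mc{X}/\mc{T}})=e^{k\omega_L-c},
\end{align*}
and extract the degree-four part of the product, which is all that survives integration over the one-dimensional fibers:
\begin{align*}
\bigl[\mathrm{Td}(T_{\mc{X}/\mc{T}})\,\mathrm{ch}(L^k+K_{\mc{X}/\mc{T}})\bigr]^{(4)}=\tfrac{1}{2}k^2\,\omega_L^2-\tfrac{1}{2}k\,\omega_L\wedge c+\tfrac{1}{12}\,c^2.
\end{align*}

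The next step is to reduce the three fiber integrals $\int_{\mc{X}_y}\omega_L^2$, $\int_{\mc{X}_y}\omega_L\wedge c$ and $\int_{\mc{X}_y}c^2$ to a single common quantity, and this is where the hypothesis enters. On a fiber the induced metric is $\omega_L|_{\mc{X}_y}=\sqrt{-1}\phi_{v\b{v}}dv\wedge d\b{v}$, and its Ricci form is the fiber restriction of $c$; the constant scalar curvature condition says precisely that $c|_{\mc{X}_y}=\rho\,\omega_L|_{\mc{X}_y}$ with $\rho$ a constant. Because $\rho$ is constant it factors out of the fiber integral, and after integrating by parts in $\b{v}$ to discard the horizontal and mixed discrepancies between $c$ and $\rho\,\omega_L$ one obtains
\begin{align*}
\int_{\mc{X}_y}\omega_L\wedge c=\rho\int_{\mc{X}_y}\omega_L^2,\qquad \int_{\mc{X}_y}c^2=\rho^2\int_{\mc{X}_y}\omega_L^2.
\end{align*}
Substituting into the degree-four form collapses the three terms into
\begin{align*}
\Bigl(\tfrac{1}{2}k^2-\tfrac{1}{2}k\rho+\tfrac{1}{12}\rho^2\Bigr)\int_{\mc{X}_y}\omega_L^2=\frac{6k^2-6k\rho+\rho^2}{12}\int_{\mc{X}_y}\omega_L^2,
\end{align*}
so the entire $k$-dependence is captured by the single numerator $6k^2-6k\rho+\rho^2$.

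It then remains to identify $\int_{\mc{X}_y}\omega_L^2$ with $\|\mu\|^2$ up to a universal constant. Here I would use the horizontal lift determined by $\phi$: in the adapted coframe $\omega_L$ has no mixed components and equals $\omega_L|_{\mc{X}_y}$ plus the geodesic curvature $c(\phi)=(\phi_{\alpha\b{\beta}}-\phi_{\alpha\b{v}}\phi^{v\b{v}}\phi_{v\b{\beta}})\zeta^\alpha\b{\zeta}^\beta$, so that $\int_{\mc{X}_y}\omega_L^2(\zeta,\b{\zeta})$ is a multiple of $\int_{\mc{X}_y}c(\phi)\,\omega_L|_{\mc{X}_y}$. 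A fiberwise integration by parts using $\mu=-\p_{\b{v}}(\phi_{\alpha\b{v}}\phi^{v\b{v}})\zeta^\alpha\,d\b{v}\otimes\p_v$ together with the constant scalar curvature identity turns this into a constant multiple of $\|\mu\|^2$, and tracking the normalizations (the powers of $2\pi$ in the de Rham Chern forms together with the factor $-\rho$ entering through the fiber area via Gauss--Bonnet) produces $\int_{\mc{X}_y}\omega_L^2(\zeta,\b{\zeta})=\tfrac{1}{2\pi^2(-\rho)}\|\mu\|^2$. Combining with the previous display yields the asserted value $\tfrac{6k^2-6k\rho+\rho^2}{24\pi^2(-\rho)}\|\mu\|^2$.

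I expect the main obstacle to be the reduction of the three characteristic integrals to the common base form, i.e.\ the identities $\int_{\mc{X}_y}\omega_L\wedge c=\rho\int_{\mc{X}_y}\omega_L^2$ and $\int_{\mc{X}_y}c^2=\rho^2\int_{\mc{X}_y}\omega_L^2$. The subtlety is that $c=\rho\,\omega_L$ holds only for the fiber--fiber component, not as full forms on $\mc{X}$, so the horizontal and mixed parts of $c$ a priori contribute; showing that those contributions are $\b{\p}$-exact along the fiber, and hence integrate away, is exactly where the \emph{constancy} of $\rho$ is indispensable, and is the step that would fail for a non constant scalar curvature metric. Once this reduction and the identification of $\int_{\mc{X}_y}\omega_L^2$ with $\|\mu\|^2$ are in hand, the remainder is the routine algebraic bookkeeping matching the Todd--Chern coefficients $\tfrac{1}{2},-\tfrac{1}{2},\tfrac{1}{12}$ against $1,\rho,\rho^2$.
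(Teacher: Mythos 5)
Your proposal is correct in substance and reaches the right constant, but it follows a genuinely different route from the paper. The paper does not re-derive the characteristic-form integral: it quotes the already-processed curvature formula \cite[Proposition 3.9]{Wan}, reproduced as (\ref{asymptotic 2}), which is valid for a general (non-csc) fibration by curves and contains the terms $\|\mu\|^2_{Ric}$, $\|\n'\mu\|^2$, $\|\b{\p}^*\mu\|^2$ and the curvature invariants $|R|^2$, $|Ric|^2$; the constant scalar curvature hypothesis then enters term by term, via $|R|^2=|Ric|^2=\rho^2$, $\|\mu\|^2_{Ric}=\rho\|\mu\|^2$, and the key vanishing $\n'\mu=\b{\p}^*\mu=0$ coming from $D_{-v}\mu=0$ (equation (\ref{2.12})), together with the identity (\ref{2.17}). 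You instead go back to the raw Bismut--Gillet--Soul\'e formula --- note that your starting point is really Theorem \ref{Bis}, i.e.\ (\ref{Chern forms}), not \cite[Proposition 3.9]{Wan}, which is already the specialized version the paper uses --- extract the degree-four part of $\mathrm{Td}\cdot\mathrm{ch}$, and collapse everything to $\int_{\mc{X}_y}\omega^2$ using csc. Your approach buys a shorter argument that needs only the single identity (\ref{2.17}) and makes transparent where the polynomial $6k^2-6k\rho+\rho^2$ comes from (the Todd--Chern coefficients $\tfrac12,-\tfrac12,\tfrac1{12}$); the paper's approach buys reusability, since (\ref{asymptotic 2}) is the formula one would need without the csc hypothesis, and it exhibits exactly which terms the hypothesis kills.

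One step of your plan is framed incorrectly, though it does not ultimately break the proof. Under the paper's normalization (\ref{1.1}), $e^{-\rho\phi}=\phi_{v\b{v}}$, one has $\sqrt{-1}\p\b{\p}\log\phi_{v\b{v}}=-\rho\,\omega$ \emph{globally on $\mc{X}$}, so $c_1(T_{\mc{X}/\mc{T}})$ is exactly proportional to $\omega$ as a form on the total space and there are no ``horizontal and mixed discrepancies'' to discard. Without that normalization the discrepancy $\log\phi_{v\b{v}}+\rho\phi$ is fiberwise harmonic, hence a pullback $\pi^*f$ from the base, and $\sqrt{-1}\p\b{\p}\pi^*f\wedge\omega$ does \emph{not} vanish under fiber integration (it contributes $\sqrt{-1}\p\b{\p}f\cdot\mathrm{vol}(\mc{X}_y)$); so the mechanism you propose --- fiberwise integration by parts --- would not remove it. The correct statement is that the normalization (\ref{1.1}), which the paper fixes once and for all, makes $f\equiv 0$. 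With that adjustment, your remaining steps (the reduction $\int_{\mc{X}_y}\omega^2=2\int_{\mc{X}_y}c(\phi)\,\omega|_{\mc{X}_y}$ from the decomposition (\ref{dec}), and the identification with $\|\mu\|^2/(-\rho)$ via (\ref{2.17})) are exactly right and the bookkeeping closes.
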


Using (\ref{0.10}) and (\ref{0.9}) we have furthermore
\begin{align}
\frac{\sqrt{-1}}{2\pi}\p\b{\p}\log (\tau_k(\b{\p}))^2
=-c_1(\lambda,\|\bullet\|_Q)-c_1(E^k, \|\bullet\|_k).
\end{align}
Theorem \ref{main theorem} is then
 an immediate consequence
of  Theorem \ref{main theorem 1} and Theorem \ref{main theorem 2}.

We mention finally that our result states that the second variation of
analytic torsion decays faster than any $k^{-l}$, and it would be interesting
to know if it is decaying exponentially as $e^{-k c}$.

This article is organized as follows. In Section \ref{sec1}, we fix the notation and recall some definitions and facts on analytic torsion, Quillen metric, Berndtsson's curvature formula and Bergman kernel on Riemann surface. In Section \ref{sec2}, we  find
 the expansion of $c_1(E^k, \|\bullet\|_k)$ and prove Theorem \ref{main theorem 1}. We also give the expression of $-c_1(\lambda, \|\bullet\|_Q)$ and prove Theorem \ref{main theorem 2}. By comparing with their expansions, we prove Theorem \ref{main theorem}.

We would like to thank
 Bo Berndtsson for many insightful
 discussions on the curvature formula of direct image bundles and
 Miroslav Englis for careful explanation of Bergman kernel
expansions on Riemann surfaces, 
	
\section{Preliminaries}\label{sec1}

\subsection{Analytic torsion and Quillen metric}\label{sub1}
We start with  the rather general setup of holomorphic fibrations
and  specify them later to the case of the fibration by Riemann surfaces
over Teichm\"u{}ller space. 
The definitions and results in this subsection
can be found in  \cite{BGV, Bismut1, Bismut2, Bismut3,  Ma1, Ray}.

 Let $\pi:\mc{X}\to M$ be a  proper holomorphic mapping between complex manifolds $\mc{X}$ and $M$, 
$(F,h_F)$  a holomorphic Hermitian vector bundle on
 $\mc{X}$, $\n^{F}$ the corresponding Chern connection, and
$R^{F}=(\n^{F})^2$ its curvature. 
For any $y\in M$, let $\mc{X}_y=\pi^{-1}(y)$ be the fiber over $y$ with 
 K\"ahler metric $g^{\mc{X}_y}$ depending smoothly on $y$. The fibers are assumed
to be compact.

The operator $D_y=\b{\p}_y+\b{\p}^*_y$ acts on the fiber $A^{0,*}(\mc{X}_y,F)$.
 For $b>0$, let $K^{b,p}_y$ be the sum of the eigenspaces of the operator  $D^2_y$ acting on $A^{0,p}(\mc{X}_y,F)$ for eigenvalues $<b$.
Let $U^b$ be the open set
$U^b=\{y\in M; b\not\in \text{Spec}D^2_y\}.$
Set
\begin{align*}
K^{b,+}=\bigoplus_{p\,\text{even}}K^{b,p},\quad K^{b,+}=\bigoplus_{p\,\text{odd}}K^{b,p},\quad K^b=K^{b,+}\oplus K^{b,-}.	
\end{align*}
Define the following line bundle $\lambda^b$ on $U^b$ by
\begin{align}\label{lambdab}\lambda^b=(\det K^{b,0})^{-1}\otimes (\det K^{b,1})\otimes (\det K^{b,2})^{-1}\otimes\cdots.\end{align}
For $0<b<c$, if $y\in U^b\cap U^c$, let $K^{(b,c),p}_y$ be the sum of
eigenspaces of $D^2_y$ in $E^p_y$ for eigenvalues $\mu$ such that
$b<\mu<c$.
Define similarly $K^{(b,c),+}_y$, $K^{(b,c),-}_y$,  $K^{(b,c)}_y$ and
and $\lambda^{(b,c)}$. Let $\b{\p}^{(b,c)}$ and $D^{(b,c)}$ be the restriction of $\b{\p}$ and $D$ to $K^{(b,c)}$,
and  $D^{(b,c)}_{\pm}$  the restriction of $D$ to $K^{(b,c),\pm}$.
The bundle  $\lambda^{(b,c)}$ has a canonical non-zero section $T(\b{\p}^{(b,c)})$ which is smooth on $U^b\cap U^c$ (see \cite[Definition 1.1]{Bismut1}). For $0<b<c$, over $U^b\cap U^c$, we have the $C^{\infty}$ identifications
$
\lambda^c=\lambda^b\otimes \lambda^{(b,c)},	
$ which is given by the following $C^{\infty}$ map
\begin{align}\label{tran}
s\in \lambda^b\mapsto s\otimes T(\b{\p}^{(b,c)})\in \lambda^c.
\end{align}

 \begin{defn}[{\cite[Def. 1.1]{Bismut3}}]\label{line bundle}
 {\it The $C^{\infty}$ line bundle  $\lambda$ over $M$ 
is $\{(U^b, \lambda^b)\}$  with the transition functions (\ref{tran}) on $U^b\cap U^c$.}
 \end{defn}
The analytic torsion  was  introduced by
Ray and Singer \cite{Ray}.
\begin{defn}\label{torsiondef}
	{\it The analytic torsion} $\tau(\b{\p}^{(b,c)})$ is  defined as the positive real number
\begin{align*}
\tau(\b{\p}^{(b,c)})=\left((\det (D_1^{(b,c)})^2)(\det (D_2^{(b,c)})^2)^{-2}(\det (D_3^{(b,c)})^2)^3\cdots\right)^{1/2},
\end{align*}
where $D^{(b,c)}_p$ is the restriction of $D$ to $K^{(b,c), p}$, $1\leq p\leq n$.  If $b$ is a small constant less than all positive eigenvalues of $D^2_y$, we denote
$\tau(\b{\p}):=\tau(\b{\p}^{(b,+\infty)}).$
\end{defn}
Let $\|\bullet\|^b$ denote the following metric on the line bundle $(\lambda^b, U^b)$,
\begin{align}\label{torsion}
\|\bullet\|^b=|\bullet|^b\tau_y(\b{\p}^{(b,+\infty)}),
\end{align}
 where $|\bullet|^b$ is the standard $L^2$-metric.
The definition of Quillen metric $\|\bullet\|_Q$ and  its
Chern form $c_1(\lambda, \|\bullet\|_Q)$ are given by the following theorem.
\begin{thm}[\cite{Bismut1, Bismut2, Bismut3}]\label{Bis}
The metrics $\|\bullet\|^b$ on $(\lambda^b, U^b)$ patch into a smooth Hermitian metric $\|\bullet\|_Q$ on the holomorphic line bundle $\lambda$.	
The Chern form of Hermitian line bundle $(\lambda, \|\bullet\|_Q)$ is
\begin{align}\label{Chern forms}
c_1(\lambda, \|\bullet\|_Q)=-\left\{\int_{\mc{X}/M} Td\left(\frac{-R^{T_{\mc{X}/M}}}{2\pi i}\right)Tr\left[\exp\left(\frac{-R^{F}}{2\pi i}\right)\right]\right\}^{(1,1)}.
\end{align}
\end{thm}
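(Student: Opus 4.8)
The plan is to prove the two assertions of the theorem separately: first that the local metrics $\|\bullet\|^b$ are compatible under the transition maps (\ref{tran}) and therefore glue to a single smooth Hermitian metric $\|\bullet\|_Q$ on $\lambda$, and then that the Chern form of $\|\bullet\|_Q$ is the degree-$(1,1)$ part of the fiber integral of the Todd--Chern character form.

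For the gluing I would exploit three multiplicative properties attached to a spectral window. Fix $0<b<c$ with $y\in U^b\cap U^c$. Since $K^{c,p}_y=K^{b,p}_y\oplus K^{(b,c),p}_y$ is an orthogonal decomposition, the determinant-of-a-direct-sum identity shows that the $L^2$-metrics split isometrically, $|\bullet|^c=|\bullet|^b\otimes|\bullet|^{(b,c)}$, under $\lambda^c=\lambda^b\otimes\lambda^{(b,c)}$. Because the window $(b,c)$ contains only positive eigenvalues, the complex $(K^{(b,c),\bullet},\bar\partial^{(b,c)})$ is acyclic, and the $L^2$-norm of its canonical section $T(\bar\partial^{(b,c)})$ equals the analytic torsion $\tau(\bar\partial^{(b,c)})$ of that window, both being the same alternating product of determinants as in Definition \ref{torsiondef}. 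Finally the spectral zeta function is additive over the disjoint windows $(b,c)$ and $(c,+\infty)$, so $\tau(\bar\partial^{(b,+\infty)})=\tau(\bar\partial^{(b,c)})\,\tau(\bar\partial^{(c,+\infty)})$. Feeding these into (\ref{torsion}) gives, for $s\in\lambda^b$,
\begin{equation*}
\|s\otimes T(\bar\partial^{(b,c)})\|^c=|s|^b\,\tau(\bar\partial^{(b,c)})\,\tau(\bar\partial^{(c,+\infty)})=|s|^b\,\tau(\bar\partial^{(b,+\infty)})=\|s\|^b,
\end{equation*}
so $\|\bullet\|^b$ and $\|\bullet\|^c$ agree under (\ref{tran}). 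Smoothness of the glued object then follows because every $y$ lies in some $U^c$ with $c$ off the spectrum, where the finite-rank spectral projections and the window torsion depend smoothly on $y$; the torsion factor is exactly what compensates the finite-rank jump of $|\bullet|^b$ as an eigenvalue crosses $b$, which is why the bare $L^2$-metric fails to glue while $\|\bullet\|_Q$ succeeds.

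For the curvature I would realise $\lambda$ as the inverse determinant of the index of the family and use Quillen's spectral definition of $\|\bullet\|_Q$, correcting the $L^2$-metric by the zeta-regularised determinant of $D^2_y$ on its positive spectrum so that $\log\|\bullet\|_Q^2=\log|\bullet|_{L^2}^2+\log\tau^2$ reproduces (\ref{torsion}). Differentiating gives
\begin{equation*}
c_1(\lambda,\|\bullet\|_Q)=c_1(\lambda,|\bullet|_{L^2})-\frac{\sqrt{-1}}{2\pi}\partial\bar\partial\log\tau^2 ,
\end{equation*}
where $c_1(\lambda,|\bullet|_{L^2})=\sum_i(-1)^{i+1}c_1(\det H^i(\mc{X}_y,F),|\bullet|_{L^2})$ through $\lambda=\bigotimes_i(\det H^i)^{(-1)^{i+1}}$, and the torsion term is controlled by the Bismut superconnection. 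On the infinite-dimensional $\mathbb Z_2$-graded bundle $\pi_*(\Lambda^{0,\bullet}T^*_{\mc{X}/M}\otimes F)$ carrying the superconnection $\mathbb A_t$ with vertical part $\sqrt t\,D_y$, the even form $\mathrm{Tr}_s[\exp(-\mathbb A_t^2)]$ is closed with $t$-independent cohomology class. Its limit as $t\to\infty$ is the Chern character form of $H^\bullet$ with the $L^2$-metric, its limit as $t\to0$ is the local index density $\mathrm{Td}(-R^{T_{\mc{X}/M}}/2\pi i)\,\mathrm{Tr}[\exp(-R^F/2\pi i)]$, and the transgression joining the two is the Bismut--K\"ohler analytic torsion form, whose degree-zero part recovers $\tau$. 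Combining its degree-two transgression identity with $c_1(\lambda,|\bullet|_{L^2})$, the cohomology contributions cancel and leave precisely the fiber integral of the index density with the sign of (\ref{Chern forms}).

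The decisive input, and the step I expect to be the main obstacle, is the $t\to0$ local families index theorem underlying the previous paragraph. Establishing it demands a Getzler-type rescaling of the Lichnerowicz formula for $\mathbb A_t^2$, uniform small-time heat-kernel asymptotics on the fibers, and a careful justification for interchanging the fiberwise supertrace, the integration over $\mc{X}/M$ and the limit $t\to0$. One must also show that the transgression converges at both ends---the spectral gap above $0$ providing the exponential decay needed as $t\to\infty$---and that the torsion form satisfies the claimed degree-two identity. Granting these analytic facts, the two parts combine to give both the smooth patched metric $\|\bullet\|_Q$ and the curvature formula (\ref{Chern forms}).
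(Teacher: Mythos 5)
The paper does not prove this theorem at all: it is quoted, with attribution, from Bismut--Gillet--Soul\'e \cite{Bismut1, Bismut2, Bismut3}, so there is no in-paper argument to compare yours against, and the real comparison is with the BGS papers themselves. Measured against those, your outline reproduces the actual strategy. The patching argument is exactly that of \cite{Bismut1}: the orthogonal splitting $K^{c,p}=K^{b,p}\oplus K^{(b,c),p}$ gives the isometric factorisation of the $L^2$-metrics, multiplicativity of the zeta-regularised determinant over disjoint spectral windows gives $\tau(\b{\p}^{(b,+\infty)})=\tau(\b{\p}^{(b,c)})\,\tau(\b{\p}^{(c,+\infty)})$, and everything reduces to the identity $|T(\b{\p}^{(b,c)})|^{(b,c)}=\tau(\b{\p}^{(b,c)})$. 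That identity is the one step of Part 1 that you assert rather than establish (``both being the same alternating product of determinants''): it is a genuine, if elementary, finite-dimensional computation, in which the weights $(-1)^{p+1}p$ of Definition \ref{torsiondef} arise precisely from expressing the norm of the canonical section of the determinant line of an acyclic complex through the operators $(D^{(b,c)}_p)^2$; it is exactly this computation that forces the particular exponents in the definition of the torsion, so it should be written out rather than waved at. Smoothness then follows as you say, since each $\|\bullet\|^b$ is smooth on $U^b$ once the overlaps are consistent.

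For the curvature formula, your route --- the superconnection $\mathbb{A}_t$ on $\pi_*(\Lambda^{0,\bullet}T^*_{\mc{X}/M}\otimes F)$, closedness and $t$-independence in cohomology of $\mathrm{Tr}_s[\exp(-\mathbb{A}_t^2)]$, the local index density as the $t\to 0$ limit, the $L^2$ Chern character of the cohomology as the $t\to\infty$ limit, and the degree-two transgression --- is again the BGS strategy (the ``torsion form'' packaging is a later reformulation, but the degree-$0$ and degree-$2$ components are what \cite{Bismut2, Bismut3} actually establish). You correctly isolate the local families index theorem and the convergence of the transgression at both ends as the decisive analytic inputs, and you leave them unproved. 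The upshot: your proposal is a correct roadmap of the known proof rather than a self-contained argument. Relative to this paper, which simply cites the result, that is an appropriate level of detail; as a proof it has two acknowledged gaps of very unequal depth, the finite-dimensional norm identity (fillable in a page) and the local families index theorem with its uniform heat-kernel estimates (the content of two long papers).
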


The Knudsen-Mumford determinant is defined by
\begin{align*}
\lambda^{KM}=(\det R\pi_*F)^{-1}.	
\end{align*}
On each fiber it is given by
$\lambda^{KM}_y=\bigotimes_{i\geq 0}\det H^i(\mc{X}_y,F)^{(-1)^{i+1}}.	$

\begin{thm}[\cite{Bismut1, Bismut2, Bismut3}]\label{Bis11}
Assume that $\pi$ is locally K\"ahler. Then the identification of the fibers $\lambda_y\cong \lambda^{KM}_y$ defines a holomorphic isomorphism of line bundles $\lambda\cong \lambda^{KM}$. The Chern form of the Quillen metric on $\lambda\cong \lambda^{KM}$ is given by
(\ref{Chern forms}).
\end{thm}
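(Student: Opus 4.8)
The statement has two parts, and the plan is to treat them in turn following the superconnection and heat-kernel machinery of Bismut-Gillet-Soul\'e. The holomorphic structure on $\lambda$ itself is already built into the transition sections (\ref{tran}), so the genuinely new content beyond Theorem \ref{Bis} is (i) that the a priori only $C^\infty$ fiberwise identification $\lambda_y\cong\lambda^{KM}_y$ assembles into a \emph{holomorphic} isomorphism of line bundles over $M$, and (ii) that the resulting Chern form is the Chern-Weil expression (\ref{Chern forms}); the locally K\"ahler hypothesis is what guarantees the absence of correction terms in the latter.

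For the holomorphic isomorphism I would start from the Knudsen-Mumford determinant $\lambda^{KM}=(\det R\pi_*F)^{-1}$, which carries a canonical holomorphic structure coming from the coherent direct images of $F$. The fiberwise map $\lambda_y\cong\lambda^{KM}_y$ is realized through harmonic representatives of $H^\bullet(\mc{X}_y,F)$ and is smooth but not obviously holomorphic. The key step is to work on each $U^b$, where the finite-dimensional subcomplex $(K^{b,\bullet},\b{\p})$ computes the fiberwise cohomology, and to show that the spectral projection onto $K^{b,\bullet}_y$ varies holomorphically in $y$ off the jump locus; one then checks that the canonical section $T(\b{\p}^{(b,c)})$ is holomorphic and that the induced local trivializations of $\lambda$ match the Knudsen-Mumford holomorphic charts. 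This upgrades the $C^\infty$ identification to a global holomorphic isomorphism $\lambda\cong\lambda^{KM}$.

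For the Chern form I would use the Bismut superconnection $B_t$ on the infinite-dimensional bundle $\pi_*(\Lambda^\bullet\otimes F)$ and study the supertraces $\mathrm{Tr}_s[\exp(-B_t^2)]$. The argument is a transgression in $t$: as $t\to 0$ the local family index theorem identifies the limit with $\int_{\mc{X}/M}\mathrm{Td}(-R^{T_{\mc{X}/M}}/2\pi i)\,\mathrm{Tr}[\exp(-R^{F}/2\pi i)]$, while as $t\to\infty$ the supertrace localizes onto the fiberwise cohomology and yields the Chern form of the $L^2$-metric on $\lambda^{KM}$. The difference of the two limits is the transgressed torsion form whose degree-zero component is $-\log\tau$, so that after applying $\p\b{\p}$ the analytic torsion is precisely the correction turning the $L^2$-curvature into the topological integrand (\ref{Chern forms}); the locally K\"ahler assumption ensures the transgression carries no anomalous boundary contribution.

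The hard part will be the two-sided asymptotics of $\mathrm{Tr}_s[\exp(-B_t^2)]$. The small-$t$ limit rests on the fiberwise local index theorem, requiring Getzler rescaling and sharp control of the heat kernel near the diagonal to extract the Todd-character density; the large-$t$ limit requires uniform spectral-gap estimates separating the harmonic part (which produces the $L^2$-metric on $\lambda^{KM}$) from the positive spectrum (which produces $\tau$). The most delicate technical point is to reconcile the jump of the naive $L^2$-metric as an eigenvalue crosses the level $b\in\mathrm{Spec}\,D_y^2$ with the compensating variation of $\tau$, so that $\|\bullet\|_Q$ is genuinely smooth and the curvature identity (\ref{Chern forms}) holds globally rather than only on each $U^b$.
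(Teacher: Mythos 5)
This theorem is not proved in the paper at all: it is quoted verbatim from Bismut--Gillet--Soul\'e \cite{Bismut1, Bismut2, Bismut3} and used as a black box, so there is no internal proof to compare your attempt against. Your outline is a faithful summary of the strategy of the original papers --- the holomorphic comparison of $\lambda$ with $\det R\pi_*F$ via the finite-dimensional subcomplexes $K^{b,\bullet}$ on each $U^b$, the transgression of $\mathrm{Tr}_s[\exp(-B_t^2)]$ between the local family index density at $t\to 0$ and the $L^2$-metric contribution at $t\to\infty$, and the anomaly cancellation that makes $\|\bullet\|_Q$ smooth across the jump loci --- and you correctly flag that the two-sided heat-kernel asymptotics and the spectral-gap estimates are where all the real work lies. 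As written, however, it is a roadmap rather than a proof: none of the named analytic steps (Getzler rescaling, uniform large-$t$ localization, the precise matching of the jump in $|\bullet|^b$ against the jump in $\tau$) is carried out, so the appropriate conclusion is simply that your sketch is consistent with the cited source, which is where the actual proof resides.
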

Here locally K\"ahler means that there is an open covering $\mathscr{U}$ of $M$ such that if $U\in \mathscr{U}$, $\pi^{-1}(U)$ admits a K\"ahler metric.

\subsection{Berndtsson's curvature formula of $L^2$-metric}\label{Subsection2}

We refer \cite{Bern2, Bern3, Bern4} and references therein for the proof and
background.

 Let $\pi:\mc{X}\to M$ be a holomorphic fibration with compact fibres and $L$ a relative ample line bundle over $\mc{X}$. We denote by
$(z;v)$ a local admissible holomorphic coordinate system of $\mc{X}$ with $\pi(z;v)=z$, where $z=\{z^\alpha\}_{1\leq \alpha\leq \dim M}$, $v=\{v^i\}_{1\leq i\leq \dim\mc{X}-\dim M}$ are the local coordinates of $M$ and fibers, respectively.

Let $\phi$ be a metric of $L$ such that 
$
(\sqrt{-1}\p\b{\p}\phi)|_{\mc{X}_y}>0	
$
for any point $y\in M$. Set 
\begin{align}\label{horizontal}
  \frac{\delta}{\delta z^{\alpha}}:=\frac{\p}{\p z^{\alpha}}-\phi_{\alpha\b{j}}\phi^{\b{j}k}\frac{\p}{\p v^k},\quad 1\leq \alpha\leq \dim M.
\end{align}
Here $\phi_{\alpha\b{j}}:=\p_{z^{\alpha}}\p_{\b{v}^j}\phi$, $(\phi^{\b{j}k})$ denotes the inverse of the matrix $(\p_{k}\p_{\b{j}}\phi)$. The geodesic curvature $c(\phi)$ is defined by
\begin{align*}
  c(\phi)=\left(\phi_{\alpha\b{\beta}}-\phi_{\alpha\b{j}}\phi^{i\b{j}}\phi_{i\b{\beta}}\right)\sqrt{-1} dz^{\alpha}\wedge d\b{z}^{\beta},
\end{align*}
which is a well-defined real $(1,1)$-form on $\mc X$. Let $\{dz^{\alpha};\delta v^k\}$
denote the dual frame of $\left\{\frac{\delta}{\delta z^{\alpha}}; \frac{\p}{\p v^i}\right\}$. 
The form $\sqrt{-1}\p\b{\p}\phi$ has
the following decomposition \cite[Lemma 1.1]{Feng}
  \begin{align}\label{dec}
    \sqrt{-1}\p\b{\p}\phi=c(\phi)+\sqrt{-1}\phi_{i\b{j}}\delta v^i\wedge \delta \b{v}^j.
  \end{align}
Consider the direct image bundle $E:=\pi_*(K_{\mc{X}/M}+L)$ with the 
natural $L^2$-metric, \cite{Bern2, Bern3, Bern4},
\begin{align}\label{L2 metric}
\|u\|^2:=\int_{\mc{X}_y}|u|^2e^{-\phi}. 	
\end{align}
for any $u=u' dv\otimes e\in E_y$, where $e$ is a local holomorphic frame of $L|_{\mc X}$, $dv=dv^1\wedge \cdots\wedge dv^n$. Here 
$$|u|^2e^{-\phi}:=(\sqrt{-1})^{n^2}|u'|^2 |e|^2dv\wedge d\b{v}=(\sqrt{-1})^{n^2}|u'|^2 e^{-\phi}dv\wedge d\b{v}.$$ We denote
\begin{align*}
\mu_{\alpha}=-\frac{\p}{\p \b{v}^l}\left(\phi_{\alpha\b{j}}\phi^{\b{j}i}\right)d\b{v}^l\otimes \frac{\p}{\p v^i}.
\end{align*}
 The following theorem was proved by Berndtsson in \cite[Theorem 1.2]{Bern4}, its proof  can also be found in \cite[Theorem 3.1]{Feng}.

\begin{thm}[\cite{Bern4}]\label{BF1}
\label{thm4} For any $y\in M$ 
the curvature 
 $\langle \Theta^{E}u, u\rangle$, $ u\in E_{y}$, 
  of the Chern connection on $E$ with the $L^2$-metric 
is given by
\begin{align*}
\langle \sqrt{-1}\Theta^{E}u,u\rangle=\int_{\mc{X}_y}c(\phi)|u|^2e^{-\phi}+\langle(1+\Delta')^{-1}i_{\mu_{\alpha}}u,i_{\mu_{\beta}}u\rangle\sqrt{-1}dz^{\alpha}\wedge d\b{z}^{\beta}.
\end{align*}
Here $\Delta'=\n'\n'^*+\n'^*\n$ is the Laplacian on $L|_{\mc{X}_y}$-valued forms on $\mc{X}_y$ defined by the $(1,0)$-part of the Chern connection on $L|_{\mc{X}_y}$.
\end{thm}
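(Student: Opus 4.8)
The statement is Berndtsson's curvature formula, so I would reconstruct the proof of \cite{Bern4} (see also \cite{Feng}). The plan is to reduce the computation of the curvature operator $\Theta^E$ to the first and second variation of the $L^2$-norm (\ref{L2 metric}) along the base, evaluated on local holomorphic sections of $E$. Recall the standard identity for a Hermitian holomorphic bundle and a holomorphic section $u$: the curvature pairing $\langle\sqrt{-1}\Theta^E u,u\rangle$ is the difference between $-\sqrt{-1}\p\b{\p}\|u\|^2$ and the $(1,1)$-form $\sqrt{-1}\langle\n^{1,0}_\alpha u,\n^{1,0}_\beta u\rangle\, dz^\alpha\w d\b{z}^\beta$ built from the $(1,0)$-part of the Chern connection. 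Everything then comes down to differentiating $\int_{\mc{X}_y}|u|^2e^{-\phi}$ twice in $y$ and identifying the two resulting pieces with the two terms of the statement.

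First I would fix the representative. A holomorphic section of $E=\pi_*(K_{\mc{X}/M}+L)$ is a fibrewise $\b{\p}$-closed $L$-valued $(n,0)$-form; I represent it on the total space in the adapted coframe $\{dz^\alpha;\delta v^k\}$ dual to the horizontal lift (\ref{horizontal}), writing $u=u'\,\delta v^1\w\cdots\w\delta v^n\otimes e$. The advantage of the horizontal frame is twofold: the decomposition (\ref{dec}) cleanly separates the base and fibre directions of $\sqrt{-1}\p\b{\p}\phi$, and the failure of $u$ to be holomorphic in the horizontal directions is measured exactly by the Kodaira--Spencer tensor. Indeed, differentiating the fibrewise holomorphy relation along $\frac{\delta}{\delta\b{z}^\beta}$ produces, after restriction to the fibre, a term that is $\b{\p}_v$-cohomologous to $i_{\mu_\beta}u$, because $\b{\p}_v V_\beta=\mu_\beta$ for the horizontal $(1,0)$-field $V_\beta=-\phi_{\beta\b{j}}\phi^{\b{j}i}\p_{v^i}$.

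Next I would compute the connection and the curvature. Differentiating $\|u\|^2$ once along $\frac{\delta}{\delta z^\alpha}$ and integrating by parts on the fibre identifies $\n^{1,0}_\alpha u$ as the fibrewise-holomorphic projection of the horizontal Lie derivative of $u$, whose component orthogonal to the holomorphic forms is $\b{\p}$-exact along the fibre. Differentiating a second time along $\frac{\delta}{\delta\b{z}^\beta}$ and again integrating by parts, the purely horizontal contribution collapses, via (\ref{dec}), to $\int_{\mc{X}_y}c(\phi)|u|^2e^{-\phi}$. The remaining contribution is the squared norm of the $\b{\p}$-exact correction; writing this correction as the $L^2$-minimal solution of a fibrewise $\b{\p}$-equation whose source is governed by $i_{\mu_\beta}u$, its norm equals $\langle G\,i_{\mu_\alpha}u,i_{\mu_\beta}u\rangle$, where $G$ is the Green operator of the relevant complex Laplacian. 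Here the Bochner--Kodaira--Nakano identity enters: because $\sqrt{-1}\Theta^L=\sqrt{-1}\p\b{\p}\phi>0$ is the fibre K\"ahler form, on the top-degree forms that arise the curvature commutator $[\sqrt{-1}\Theta^L,\Lambda]$ contributes the zeroth-order shift $+1$, converting the $\b{\p}$-Laplacian into $1+\Delta'$ and producing the resolvent $(1+\Delta')^{-1}$ of the statement.

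The main obstacle, and the step deserving the most care, is exactly this last identification: tracking how the fibrewise $\b{\p}$-problem for the non-holomorphic part of $\n^{1,0}u$ becomes the resolvent $(1+\Delta')^{-1}$ with precisely the ``$+1$'' shift. This requires both the identity $\b{\p}_v V_\beta=\mu_\beta$, which relates the horizontal derivative to $i_{\mu_\beta}u$, and the Bochner--Kodaira--Nakano commutator on the correct bidegree, where the positivity of $L$ supplies the shift. Two further routine but necessary checks are that the whole expression is independent of the representative $u$ chosen on $\mc{X}$ (which is forced by the canonicity of the Chern connection), and that all fibre integrations by parts produce no boundary terms, since the fibres are compact.
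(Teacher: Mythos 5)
The paper does not actually prove this theorem: it is imported from \cite{Bern4}, with \cite[Theorem 3.1]{Feng} cited for a proof, so there is no internal argument to compare against. Your reconstruction --- second variation of the $L^2$-norm along the horizontal lifts (\ref{horizontal}), the geodesic-curvature term extracted via (\ref{dec}), and the norm of the minimal solution of the fibrewise $\bar{\partial}$-equation sourced by $i_{\mu_\beta}u$, with the shift in $(1+\Delta')^{-1}$ supplied by the Bochner--Kodaira--Nakano commutator $[\sqrt{-1}\Theta^L,\Lambda]$ on the top-holomorphic-degree forms where it equals $+1$ --- is precisely the strategy of the cited proofs and is correct.
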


We replace now the Hermitian line bundle $(L,e^{-\phi})$ by $(L^k,e^{-k\phi})$, 
and consider the corresponding direct image bundle $E^k:=\pi_*(L^k+K_{\mc{X}/M})$. Let $\n'^*_k$ (resp. $\n'^*$) be the adjoint operator of  $\n'$ with respect to $(L^k,e^{-k\phi})$ and $(X, k\omega=k\sqrt{-1}\p\b{\p}\phi)$ (resp. $(X, \omega=\sqrt{-1}\p\b{\p}\phi)$). We have
\begin{align}
\n'^*=\sqrt{-1}[\Lambda_{k\omega},\n']=\frac{1}{k}\sqrt{-1}[\Lambda_{\omega},\n']=\frac{1}{k}\n'^*.	
\end{align}
Hence
\begin{align}\label{delt1}
\Delta'_k=\n'^*_k\n'+\n'\n'^*_k=\frac{1}{k}\Delta'.	
\end{align}
 From Theorem \ref{BF1} and (\ref{delt1}), the curvature of $L^2$-metric (see (\ref{L2 metric})) on $E^k$ is given by 
\begin{align}\label{L2 curvature}
\begin{split}
\langle \sqrt{-1}\Theta^{E^k}u,u\rangle &=\int_{\mc{X}_y}c(k\phi)|u|^2e^{-k\phi}+\langle (1+\Delta'_k)^{-1}i_{\mu_{\alpha}}u,i_{\mu_{\beta}}u\rangle_{k\omega}\sqrt{-1}dz^{\alpha}\wedge d\b{z}^{\beta}\\
&=\int_{\mc{X}_y}kc(\phi)|u|^2e^{-k\phi}+k\langle (k+\Delta')^{-1}i_{\mu_{\alpha}}u,i_{\mu_{\beta}}u\rangle\sqrt{-1}dz^{\alpha}\wedge d\b{z}^{\beta}
\end{split}
\end{align}
for any element $u$ of $E^k_y$.

\subsection{Bergman Kernel on Riemann surface}

Let $M$ be an compact complex K\"ahler manifold with an ample line bundle $L$ over $M$. Let $g$ be the K\"ahler metric on $M$ corresponding to the K\"ahler form $\omega_g=\text{Ric}(h)$ for some positive curvature Hermitian metric $h$ on $L$. The metric $h$ induces a metric $h_k$ on $L^k$.
Let $\{S_0,\cdots, S_{d_k-1}\}$ be an orthonormal basis of the space $H^0(M, L^k)$ with respect to the inner 
$$(S,T)=\int_M \langle S(x), T(x)\rangle_{h_k}dV_g,$$
where $d_k=\dim H^0(M, L^k)$. Then the diagonal of Bergman kernel is given by
\begin{align}
	\sum_{i=0}^{d_k-1}|S_i(x)|^2_{h_k}.
\end{align}

The Tian-Yau-Zelditch expansion of Bergman kernel 
has been extensively studied. For Riemann surfaces it has
some particular nature in that the expansion has only two terms;
more precisely we have
the following 

\begin{thm}[{\cite[Theorem 1.1]{Liu}}]\label{TYZ}
	Let $M$ be a regular compact Riemann surface and $K_M$ be the canonical line bundle endowed with a Hermitian metric $h$ such that the curvature $\text{Ric}(h)$ of $h$ defines a K\"ahler metric $g$ on $M$. Suppose that the metric $g$ has constant scalar curvature $\rho$. Then there is a complete asymptotic expansion:
	\begin{align}
		\sum_{i=0}^{d_k-1}\|S_i(x)\|^2_{h_k}\sim k(1+\frac{\rho}{2k})+O\left(e^{-\frac{(\log k)^2}{8}}\right),
	\end{align}
where $\{S_0,\cdots, S_{d_k-1}\}$ is an orthonormal basis for $H^0(M, K_M^k)$ for some $k>\max\{e^{20\sqrt{5}}+2|\rho|, |\rho|^{4/3}, \frac{1}{\delta}, \sqrt{\frac{2}{|\rho|}}\}$, where $\delta$ is the injective radius at $x_0$.
\end{thm}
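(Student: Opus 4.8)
To prove Theorem~\ref{TYZ} the plan is to replace the global surface by an explicit model and to treat $M$ as a quotient. In complex dimension one, constant scalar curvature is the same as constant Gauss curvature, so $(M,g)$ is a space form: its universal cover $\widetilde M$ is $\mathbb{P}^1$, $\mathbb{C}$ or the unit disk $\mathbb{D}$ according to the sign of $\rho$, and $M=\Gamma\backslash\widetilde M$ for a discrete group $\Gamma$ of isometries acting freely. The pair $(K_M,h)$ and the metric $g$ lift to the homogeneous data on $\widetilde M$, and since holomorphic $k$-differentials on $M$ are exactly the $\Gamma$-automorphic forms of weight $2k$ with the Petersson inner product, the reproducing kernel on $M$ is the $\Gamma$-periodization of the model kernel,
\begin{align*}
B^M_k(x,x)=\sum_{\gamma\in\Gamma}B^{\widetilde M}_k(\widetilde x,\gamma\widetilde x),
\end{align*}
where $\widetilde x$ is a fixed lift of $x$ and each summand carries the natural automorphy factor of $K^k$. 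The genus $0$ case ($\Gamma$ trivial) and the genus $1$ case ($\Gamma\cong\mathbb{Z}^2$, Bargmann--Fock model) are of the same type, the hyperbolic case $\rho<0$ being representative.

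First I would compute the model diagonal. Transitivity of the isometry group of $\widetilde M$ forces $B^{\widetilde M}_k(\widetilde x,\widetilde x)$ to be constant. Realizing holomorphic $k$-differentials on $\mathbb{D}$ as a weighted Bergman space $A^2_\alpha(\mathbb{D})$ with $\alpha=2k-2$, whose reproducing kernel is the rational expression $\tfrac{\alpha+1}{\pi}(1-w\overline{w'})^{-\alpha-2}$, one finds after dividing by the pointwise norm of $(dw)^{\otimes k}$ that this constant is \emph{exactly} affine-linear in $k$. Matching the normalization $\omega_g=\mathrm{Ric}(h)$ identifies it with $k+\tfrac{\rho}{2}=k\bigl(1+\tfrac{\rho}{2k}\bigr)$. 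This is the structural heart of the statement: the model kernel is a genuine rational function of $k$, so there really are only two terms and all higher Tian--Yau--Zelditch coefficients vanish, rather than merely being $O(k^{-\infty})$.

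The identity term $\gamma=\mathrm{id}$ then reproduces $k+\tfrac{\rho}{2}$ exactly, and the remainder is the off-diagonal sum over $\gamma\neq\mathrm{id}$. The quantitative input is the decay of the normalized model kernel; on $\mathbb{D}$ a direct computation gives
\begin{align*}
\frac{\lvert B^{\widetilde M}_k(\widetilde x,\gamma\widetilde x)\rvert}{B^{\widetilde M}_k(\widetilde x,\widetilde x)}=\cosh^{-2k}\!\Bigl(\tfrac12\,d(\widetilde x,\gamma\widetilde x)\Bigr),
\end{align*}
which near the diagonal behaves like the Gaussian $e^{-k\,d^2/4}$ and for large distance like $e^{-k d}$. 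Since $d(\widetilde x,\gamma\widetilde x)\ge 2\delta$ for $\gamma\neq\mathrm{id}$, with $\delta$ the injectivity radius, and the orbit-counting function of $\Gamma$ grows at most exponentially in the distance, the off-diagonal sum is super-polynomially small once $k$ exceeds the thresholds $1/\delta$, $\lvert\rho\rvert$, $\sqrt{2/\lvert\rho\rvert}$ appearing in the statement. Balancing the Gaussian decay against the orbit count and truncating at the optimal radius is what produces the explicit remainder $O\bigl(e^{-(\log k)^2/8}\bigr)$.

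The main obstacle is precisely this last step. The model diagonal computation and the vanishing of the higher coefficients are routine once homogeneity is used; the delicate part is the uniform off-diagonal estimate for the model Bergman kernel with explicit constants, together with the summation of the Poincar\'e series against the orbit-counting function, since it is this balance that fixes both the precise form of the remainder and the admissible range of $k$ in terms of the injectivity radius.
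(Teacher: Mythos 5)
You should note first that the paper contains no proof of Theorem \ref{TYZ}: it is imported verbatim from \cite[Theorem 1.1]{Liu} and used as a black box (the only original content nearby is the remark that the same expansion persists for $L^m+K_M$). So the comparison below is with the cited source's argument rather than with anything internal to this paper. Your route --- uniformization, identification of $H^0(M,K_M^k)$ with weight-$2k$ automorphic forms under the Petersson pairing, exact computation of the model diagonal from the weighted Bergman kernel $\tfrac{\alpha+1}{\pi}(1-w\bar{w}')^{-\alpha-2}$, and periodization over $\Gamma$ --- is sound and is genuinely different from Liu's, which works on $M$ itself via the peak-section/near-diagonal Tian--Yau--Zelditch machinery and shows that constant curvature kills all coefficients beyond the linear and constant terms; the thresholds $k>e^{20\sqrt{5}}+2|\rho|$ etc.\ and the remainder $O(e^{-(\log k)^2/8})$ are artifacts of cutting off Gaussian integrals at radius of order $\log k/\sqrt{k}$ in that method. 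Your approach buys a cleaner structural explanation (homogeneity forces the model diagonal to be exactly affine in $k$) and in fact a \emph{stronger} remainder: since every nontrivial orbit point lies at distance at least $2\delta>0$ independent of $k$, the off-diagonal Poincar\'e series is $O(e^{-ck})$ with $c$ depending only on the injectivity radius, and no balancing against the orbit-counting function is needed --- so your final paragraph mislocates the origin of the $e^{-(\log k)^2/8}$ shape, which simply is not what the covering-space argument produces. Two small points to tighten: the $\mathbb{P}^1$ and torus cases are vacuous for $K_M^k$ (there $\mathrm{Ric}(h)$ cannot be a K\"ahler form, and the relevant $H^0$ degenerates), so only genus $\geq 2$, $\rho<0$ actually occurs; and the unfolding identity $B^M_k(x,x)=\sum_{\gamma}B^{\widetilde{M}}_k(\widetilde{x},\gamma\widetilde{x})$ requires absolute convergence of the Poincar\'e series, which holds for weight $2k\geq 4$ but should be stated, as should the extension to the twisted bundles $L^m+K_M$ actually used in the paper, where a flat unitary character enters the automorphy factor.
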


We note that the expansion holds
also for the bundle $L^m +K_M$ where $L$ is any ample line bundle such that its curvature gives a K\"ahler metric with constant scalar curvature.
Indeed the same proof there works also for this case; alternatively one may
argue abstractly that the expansion is determined by the curvature of $L$.
(Presumably the above expansion can  be proved using
the more elementary method in \cite{Englis}.)

\section{The second variation of analytic torsion}\label{sec2}

Let $\mc{T}$ be the Teichm\"uller space of Riemann surface of genus
$g\geq 2$. Let $\pi: \mc{X}\to \mc{T}$ be the holomorphic fibration
of the Teichm\"u{}ller curve over
$\mc{T}$,  the fiber $\mc{X}_y:=\pi^{-1}(y)$ being exactly the Riemann
surface given by the complex structure $y\in\mc{T}$;
see \cite{Ahlfors}.
 Let $L$ be a relative ample line bundle over $\mc{X}$, namely there exists a metric $\phi$ of $L$ such that the curvature $\sqrt{-1}\p\b{\p}\phi|_{\mc{X}_y}>0$, this implies that $\pi:\mc{X}\to \mc{T}$ is a local K\"ahler fibration. 
Denote 
\begin{align}
\omega=\sqrt{-1}\p\b{\p}\phi.	
\end{align}
We take $(z^1,\cdots, z^m, v)$  a local admissible coordinate system
of $\mc{X}$
as in the Subsection \ref{Subsection2}.  
Then $\omega|_{\mc{X}_y}=\sqrt{-1}\phi_{v\b{v}}dv\wedge d\b{v}$ gives a K\"ahler metric on $\mc{X}_y$, $\phi_{v\b{v}}:=\frac{\p^2\phi}{\p v\p\b{v}}$. The scalar curvature is defined by 
\begin{align}\label{rho}
\rho=-\frac{1}{\phi_{v\b{v}}}\p_v\p_{\b{v}}\log\phi_{v\b{v}}.
\end{align}

Now we assume that the scalar curvature $\rho$ is a constant.
Up to a constant we  can
 take $\phi$ such that
\begin{align}\label{1.1}
e^{-\rho\phi}=\phi_{v\b{v}}.	
\end{align}
In particular,  $-\rho=\int_{\mc{X}_y}c_1(K_{\mc{X}_y})/\int_{\mc{X}_y}c_1(L)$ is a positive rational number. 
Let $K_{\mc{X}/\mc{T}}:=K_{\mc{X}}-\pi^*K_{\mc{T}}$ denote the
relative canonical line bundle.
Consider the following direct image bundle over $\mc{T}$
\begin{align}
E^k:=\pi_*(L^k+K_{\mc{X}/\mc{T}}),
\end{align}
for any integer $k\geq 1$.
The operator $D_y=\b{\p}_y+\b{\p}^*_y$ acts on  $A^{0,*}(\mc{X}_y, L^k+K_{\mc{X}/\mc{T}})$.
Take  a constant $b>0$  smaller than all the positive eigenvalues of $D_y$. Then
$
K^{b,p}_y\cong H^p(\mc{X}_y, K_{\mc{X}_y}+L^k).
$
Furthermore by Kodaira vanishing theorem,
  \begin{align*}
  K^{b,0}_y\cong H^{0}(\mc{X}_y,L^k+K_{\mc{X}_y})\cong \pi_*(L^k+K_{\mc{X}/\mc{T}})_y  \quad K_y^{b,p}=0, \quad \text{for}\quad p\geq 1, 	
  \end{align*}
consequently
  \begin{align}\label{3.1}
  \lambda^b=(\det \pi_*(L^k+K_{\mc{X}_y}))^{-1}.	
  \end{align}
Since the metric $\phi$ induces a metric  $(\det\phi)^{-1}:=(\phi_{v\b{v}})^{-1}$ on $K_{\mc{X}/\mc{T}}$, by (\ref{L2 metric}), we have
\begin{align}\label{metric consist}
|u|^2e^{-k\phi}=\sqrt{-1}|u'|^2e^{-k\phi}dv\wedge d\b{v}=|u'|^2e^{-k\phi}(\det\phi)^{-1}\omega=|u|^2_{L^2}\omega,
\end{align}
that is, the $L^2$-metric $\|\bullet\|_k$ on $\pi_*(L^k+K_{\mc{X}/\mc{T}})$ given by (\ref{L2 metric}) coincides with the standard $L^2$-metric on $\pi_*(L^k+K_{\mc{X}/\mc{T}})$ induced by $(\mc{X}_y, \omega|_y)$, $(K_{\mc{X}_y}, (\det\phi)^{-1})$ and $(L,e^{-\phi})$.
Thus the $L^2$-metric $(|\bullet|^b)^2$ is dual to the determinant of
the metric $\|\bullet\|^2$.  Using the definition (\ref{torsion}) we have
then
\begin{align}\label{torsion1}
(\|\bullet\|^b)^2=(|\bullet|^b)^2(\tau_k(\b{\p}^{(b,+\infty)}))^2=(\det\|\bullet\|^2_k)^* (\tau_k(\b{\p}))^2,	
\end{align}
for $b>0$ small enough, where $\tau_k(\b{\p})=\tau_{k}(\b{\p}^{(b,+\infty)})$ is the analytic torsion associated with $(\mc{X}, \omega=\sqrt{-1}\p\b{\p}\phi))$ and $(L^k, e^{-k\phi})$. Therefore,
\begin{align}\label{Torsion}
\begin{split}
\frac{\sqrt{-1}}{2\pi}\p\b{\p}\log (\tau_k(\b{\p}))^2
&=-c_1(\lambda,\|\bullet\|_Q)-c_1(E^k, \|\bullet\|_k).
\end{split}
\end{align}

\subsection{The curvature of $L^2$-metric} In this subsection we will find
the expansion of  the first Chern form $c_1(E^k, \|\bullet\|_k)$. 

From (\ref{L2 curvature}), the curvature of $L^2$-metric is 
\begin{align}
\begin{split}
\langle \sqrt{-1}\Theta^{E^k}u,u\rangle 
&=\int_{\mc{X}_y}kc(\phi)|u|^2e^{-k\phi}+k\langle (k+\Delta')^{-1}i_{\mu_{\alpha}}u,i_{\mu_{\beta}}u\rangle\sqrt{-1}dz^{\alpha}\wedge d\b{z}^{\beta}
\end{split}
\end{align}
for any element $u$ of $E^k_y$.
For any tangent vector $\zeta=\zeta^{\alpha}\frac{\p}{\p z^{\alpha}}\in T_y\mc{T}$
\begin{align}\label{1.2}
\begin{split}
	&\quad(-\sqrt{-1})c_1(E^k, \|\bullet\|_k)(\zeta,\b{\zeta})=\frac{1}{2\pi}\sum_{j=1}^{d_k}\langle\Theta^{E^k}u_j,u_j\rangle(\zeta,\b{\zeta})\\
	&=-\frac{\sqrt{-1}}{2\pi}\left(\int_{\mc{X}_y}kc(\phi)\sum_{j=1}^{d_k}|u_j|^2e^{-k\phi}\right)(\zeta,\b{\zeta})
	+\frac{1}{2\pi}\sum_{j=1}^{d_k}k\langle (k+\Delta')^{-1}i_{\mu}u_j,i_{\mu}u_j\rangle,
	\end{split}
\end{align}
where  $d_k:=\dim H^0(\mc{X}_y, (L^k+K_{\mc{X}/\mc{T}})|_{\mc{X}_y})$, $\{u_j\}_{j=1}^{d_k}$ is a orthonormal basis of $H^0(\mc{X}_y, (L^k+K_{\mc{X}/\mc{T}})|_{\mc{X}_y})$ and
\begin{align}\label{definition of mu}
\mu=-\p_{\b{v}}(\phi_{\alpha\b{v}}(\phi_{v\b{v}})^{-1})\zeta^{\alpha} d\b{v}\otimes \frac{\p}{\p v}.	
\end{align}
We shall now find expansion of the two terms in RHS of (\ref{1.2}) in $k$.

For any $y\in\mc{T}$, $\mc{X}_y$ is a Riemann surface and the metric $\omega|_{\mc{X}_y}$ is a K\"ahler metric with constant scalar curvature $\rho$. By (\ref{1.1}), the curvature operator $R^*$ of cotangent bundle is 
\begin{align}
\begin{split}
R^*&:=R^{v\b{v}}_{v\b{v}}i_{\frac{\p}{\p v}}dv\wedge d\b{v}\wedge i_{\frac{\p}{\p \b{v}}}\\
&:=(-\p_v\p_{\b{v}}(\phi_{v\b{v}})^{-1}+\p_v(\phi_{v\b{v}})^{-1}\p_{\b{v}}(\phi_{v\b{v}})^{-1}\phi_{v\b{v}})i_{\frac{\p}{\p v}}dv\wedge d\b{v}\wedge i_{\frac{\p}{\p \b{v}}}\\
&=-e^{\rho\phi}\p_v\p_{\b{v}}\log e^{\rho\phi}\cdot \text{Id}\\
&=-\rho\cdot \text{Id}
\end{split}
\end{align}
when acting on the space $A^{0,1}(\mc{X}_y, L^k)$. Thus by \cite[Lemma 3.2]{Wan}, for any $\alpha\in A^{0,1}(\mc{X}_y, L^k)$, 
\begin{align}\label{0.1}
\begin{split}
(k+\rho-\n'^*\n')\alpha&=(k-R^*-\n'^*\n')\alpha\\
&=(dv)^*\n'(\n_{\b{v}}\alpha)\\
&=\phi^{-1}_{v\b{v}}i_{\frac{\p}{\p v}}(\p-k\p\phi)\n_{\b{v}}\alpha\\
&=e^{\rho\phi}\n_{v}\n_{\b{v}}\alpha.	
\end{split}
\end{align}
Here $\n'^*$ denotes the adjoint operator of the $(1,0)$-part $\n'$ of Chern connection, and we have used the following notations:
\begin{align}\label{notation1}
\n_{l\b{v}}:=\p_{\b{v}}+l\rho\phi_{\b{v}},\quad \n_v:=\p_v- k\phi_v
\end{align}
for any integer $l$.

\begin{lemma}\label{lemma1} For any $a, b\in \mb{R}$, we have 
\begin{align}
e^{\rho\phi}[\p_{\b{v}}-a\phi_{\b{v}}, \p_v-b\phi_v]=a-b.	
\end{align}
\end{lemma}
\begin{proof}
By a direct computation we have
\begin{align*}
	e^{\rho\phi}[\p_{\b{v}}-a\phi_{\b{v}}, \p_v-b\phi_v]=e^{\rho\phi}(-b\phi_{v\b{v}}+a\phi_{v\b{v}})=a-b,
\end{align*}
	where the last equality follows from (\ref{1.1}).
\end{proof}
Using the lemma we have then
\begin{align}
e^{\rho\phi}[\n_{\b{v}},\n_v]=e^{\rho\phi}[\p_{\b{v}}+\rho\phi_{\b{v}},\p_v- k\phi_v]=-\rho-k,	
\end{align}
and
\begin{align}\label{1.3}
\begin{split}
	[\n_{l\b{v}}, e^{\rho\phi}\n_v]&=[\n_{l\b{v}}, e^{\rho\phi}]\n_v+e^{\rho\phi}[\n_{l\b{v}},\n_v]\\
	&=e^{\rho\phi}\rho\phi_{\b{v}}\n_v+(-l\rho-k)\\
	&=e^{\rho\phi}\n_v \rho\phi_{\b{v}}+(-(l+1)\rho-k).
\end{split}	
\end{align}
Combining  (\ref{notation1}) and (\ref{1.3}) we find
\begin{align}\label{0.2}
\begin{split}
\n_{\b{v}}e^{\rho\phi}\n_{v}\n_{\b{v}}&=(e^{\rho\phi}\n_{v}\n_{\b{v}}+e^{\rho\phi}\n_v \rho\phi_{\b{v}}+(-2\rho-k))\n_{\b{v}}\\
&=e^{\rho\phi}\n_{v}\n_{2\b{v}}\n_{\b{v}}+(-2\rho-k)\n_{\b{v}}.
\end{split}
\end{align}
By  induction we get 
\begin{align}\label{0.3}
\n_{l\b{v}}\cdots\n_{\b{v}}(e^{\rho\phi}\n_{v})\n_{\b{v}}=(e^{\rho\phi}\n_{v})\n_{(l+1)\b{v}}\cdots\n_{\b{v}}+\left(-k-\frac{l+3}{2}\rho\right)l\n_{l\b{v}}\cdots\n_{\b{v}}.
\end{align}
for any $l\ge 1$. In fact for $l=1$ this is exactly (\ref{0.2}). So we
assume that (\ref{0.3}) holds for $1, \cdots,  l-1$. 
By (\ref{1.3})
\begin{align*}
	&\n_{l\b{v}}\cdots\n_{\b{v}}(e^{\rho\phi}\n_{v})\n_{\b{v}}=\n_{l\b{v}}\left((e^{\rho\phi}\n_{v})\n_{l\b{v}}\cdots\n_{\b{v}}+(-k-\frac{l+2}{2}\rho)(l-1)\n_{(l-1)\b{v}}\cdots\n_{\b{v}}\right)\\
	&=(e^{\rho\phi}\n_{v})\n_{(l+1)\b{v}}\cdots\n_{\b{v}}+\left((-k-(l+1)\rho)+(-k-\frac{l+2}{2}\rho)(l-1)\right)\n_{l\b{v}}\cdots\n_{\b{v}}\\
	&=(e^{\rho\phi}\n_{v})\n_{(l+1)\b{v}}\cdots\n_{\b{v}}+(-k-\frac{l+3}{2}\rho)l\n_{l\b{v}}\cdots\n_{\b{v}},
\end{align*}
 completing the proof of (\ref{0.3}). 

For later convenience we set
\begin{align}\label{set1}
k=\frac{m-\rho}{2},\quad A_n=\left(-k-\frac{n+3}{2}\rho\right)n=\left(-\frac{m}{2}-\frac{n+2}{2}\rho\right)n
\end{align}
and 
\begin{align}\label{set2}
\square_{n}:=\underbrace{(e^{\rho\phi}\n_{v})\cdots(e^{\rho\phi}\n_{v})}_n\n_{n\b{v}}\n_{(n-1)\b{v}}\cdots\n_{\b{v}}.	
\end{align}
The formula (\ref{0.3}) can now be written as
\begin{align}\label{2.1}
\square_{n}\square_{1}=\square_{n+1}+A_n\square_{n},\quad 	\square_{1}=(e^{\rho\phi}\n_{v})\n_{\b{v}}.
\end{align}

\begin{lemma}\label{lemma3}
For any $N\in \mb{N}_+$,  the operator $\square_{N+1}$ is self-adjoint and 
\begin{align}
(\square_{N+1})^2=\sum_{n=N+1}^{2N+2}b_n \square_{n}	
\end{align}
	for some constants $b_n$ with $|b_n|=O(k^{N+1})$ as $k\to\infty$.
\end{lemma}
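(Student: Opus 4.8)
The plan is to observe that every $\square_n$ is a polynomial in the single operator $\square_1$, which collapses both assertions into elementary facts about polynomials. First I would read off from the recursion (\ref{2.1}) that
\begin{equation*}
\square_{n} = P_n(\square_1), \qquad P_n(x) := x\prod_{j=1}^{n-1}(x-A_j),
\end{equation*}
a monic polynomial of degree $n$ evaluated at $\square_1$. This follows by induction: $\square_1 = P_1(\square_1)$ with $P_1(x)=x$, and (\ref{2.1}) gives $\square_{n+1} = \square_n\square_1 - A_n\square_n = \square_n(\square_1-A_n)$, so $P_{n+1}(x) = P_n(x)(x-A_n)$. In particular all the $\square_n$ commute, being polynomials in $\square_1$.

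For self-adjointness I would invoke (\ref{0.1}), which identifies $\square_1 = (e^{\rho\phi}\nabla_v)\nabla_{\bar v}$ with $(k+\rho)\,\mathrm{Id}-\nabla'^*\nabla'$ on $A^{0,1}(\mc{X}_y,L^k)$. Since $\nabla'^*\nabla'$ is manifestly self-adjoint and $k,\rho\in\mathbb{R}$, the operator $\square_1$ is self-adjoint. As each $A_j=(-k-\tfrac{j+3}{2}\rho)j$ from (\ref{set1}) is real, the polynomial $P_{N+1}$ has real coefficients, and therefore $\square_{N+1}=P_{N+1}(\square_1)$ is self-adjoint.

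The decomposition is then a statement about the graded basis $\{P_n\}_{n\ge 0}$ (with $P_0=1$) of $\mathbb{R}[x]$. The key observation is the divisibility $P_{N+1}\mid P_n$ for every $n\ge N+1$; indeed $P_n = P_{N+1}\cdot\prod_{j=N+1}^{n-1}(x-A_j)$. Setting $R_m(x):=\prod_{j=N+1}^{N+m}(x-A_j)$, monic of degree $m$ with $R_0=1$, this reads $P_{N+1+m}=P_{N+1}R_m$, so $\operatorname{span}\{P_{N+1},\dots,P_{2N+2}\}=P_{N+1}\cdot\{\text{polynomials of degree}\le N+1\}$. Because $\deg P_{N+1}=N+1$, the square $P_{N+1}^2=P_{N+1}\cdot P_{N+1}$ lies in this span; writing $P_{N+1}=\sum_{m=0}^{N+1}c_m R_m$ gives
\begin{equation*}
(\square_{N+1})^2 = P_{N+1}(\square_1)\sum_{m=0}^{N+1}c_m R_m(\square_1) = \sum_{m=0}^{N+1}c_m\,\square_{N+1+m} = \sum_{n=N+1}^{2N+2}b_n\,\square_n,
\end{equation*}
with $b_n = c_{n-N-1}$. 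This is exactly the claimed range $N+1\le n\le 2N+2$, the lower bound $N+1$ being forced by the double root of $P_{N+1}^2$ at $0,A_1,\dots,A_N$, which is shared by every $P_n$ with $n\ge N+1$ but not by any $P_n$ with $n\le N$.

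Finally, for the size bound I would use $A_j=O(k)$: rescaling $x=ku$ turns $P_{N+1}(x)=k^{N+1}\tilde P_k(u)$ and $R_m(x)=k^m\tilde R_{m,k}(u)$ into monic polynomials whose coefficients are bounded uniformly in $k$, since the rescaled nodes $A_j/k=-j+O(1/k)$ stay bounded. Thus the expansion becomes $\tilde P_k=\sum_m (c_m k^{m-N-1})\,\tilde R_{m,k}$, and because the (unit-diagonal, fixed-size, bounded-entry) triangular change of basis between $\{\tilde R_{m,k}\}$ and monomials is boundedly invertible, each $c_m k^{m-N-1}$ is bounded, i.e. $c_m=O(k^{N+1-m})$. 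Hence $b_n=c_{n-N-1}=O(k^{\,2N+2-n})=O(k^{N+1})$ for all $N+1\le n\le 2N+2$, as required. I expect the only genuinely delicate point to be this last uniform-in-$k$ coefficient estimate; the rest is formal polynomial algebra built on the recursion (\ref{2.1}) and the identity (\ref{0.1}).
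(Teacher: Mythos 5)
Your proof is correct and follows essentially the same route as the paper's: the factorization $\square_{N+1}=\prod_{n=0}^{N}(\square_{1}-A_n)$ (your $P_{N+1}(\square_1)$), self-adjointness of $\square_1$ via (\ref{0.1}), and re-expansion of the square in the graded family $\{\square_n\}$ using the recursion (\ref{2.1}) together with $A_j=O(k)$. Your explicit change-of-basis and rescaling argument for the bound $|b_n|=O(k^{N+1})$ is a cleaner, fully rigorous rendering of the paper's ``repeating the same procedure'' step, but the underlying mechanism is identical.
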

\begin{proof}
	Using (\ref{2.1}) we have
the following factorization
	\begin{align}\label{1.4}
		\square_{N+1}=(\square_{1}-A_N)\square_{N}=\prod_{n=0}^N (\square_{1}-A_n).
	\end{align}
The identity (\ref{0.1}) implies that
\begin{align}
\square_{1}=k+\rho-\n'^*\n'
\end{align}
is self-adjoint. Combining this with (\ref{1.4}) 
we see  that $\square_{N+1}$ is also self-adjoint.

We compute 
	$(\square_{N+1})^2$ using (\ref{1.4}) and (\ref{2.1}),
\begin{align}\label{1.5}
\begin{split}
	(\square_{N+1})^2
&=\prod_{n=0}^N (\square_{1}-A_n)\square_{N+1}\\
	&=\prod_{n=0}^{N-1}(\square_{1}-A_n)\left((\square_{1}-A_{N+1})+(A_{N+1}-A_N)\right)\square_{N+1}\\
	&=\prod_{n=0}^{N-1}(\square_{1}-A_n)\square_{N+2}+(A_{N+1}-A_N)\prod_{n=0}^{N-1}(\square_{1}-A_n)\square_{N+1}\\
	&=\prod_{n=0}^{N-1}(\square_{1}-A_n)
        \square_{N+2}+O(k)\prod_{n=0}^{N-1}(\square_{1}-A_n)\square_{N+1},
	\end{split}
\end{align}
where the last equality follows from (\ref{set1}), $|A_n|=O(k)$.
In other words
$\prod_{n=0}^N (\square_{1}-A_n)\square_{N+1}$ is a linear combination
of of $\square_{N+2}$ and $\square_{N+1}$ with the coefficients
being of the form $\prod_{n=0}^{N-1}(\square_{1}-A_n)$.
Repeating the same procedure we can reduce
the coefficients to be constants, namely there exist constants $b_n$ with $|b_n|\in O(k^{N+1}), N+1\leq n\leq 2N+2$ such that
\begin{align*}
	(\square_{N+1})^2=\sum_{n=N+1}^{2N+2}b_n \square_{n}.	
\end{align*}

\end{proof}

\begin{lemma}\label{2.2}
 For any $N\in \mb{N}_+$ and $m>(-\rho)N$
the following identity 
holds as an operator
 on the space $A^{0,1}(\mc{X}_y, L^k)$, 
\begin{align}
(k+\Delta')^{-1}(1-a_N\square_{N+1})=\sum_{n=0}^{N}a_n \square_{n},
\end{align}
	where $\square_{0}:=\text{Id}$ and
	\begin{align}\label{defnan}
	a_n=\frac{2^{n+1}}{(n+2)!(m+\rho n)\cdots(m+\rho)m}.	
	\end{align}
\end{lemma}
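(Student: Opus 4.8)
The plan is to reduce the stated operator identity to a single polynomial identity in the operator $\square_1$ and to verify it by telescoping. The key preliminary observation is that, on $A^{0,1}(\mc{X}_y, L^k)$, the resolvent $(k+\Delta')^{-1}$ is nothing but the inverse of $m-\square_1$. Indeed, on $(0,1)$-forms over a Riemann surface the term $\nabla'\nabla'^*$ in $\Delta'=\nabla'\nabla'^*+\nabla'^*\nabla'$ vanishes for degree reasons, so $\Delta'=\nabla'^*\nabla'$ there; together with (\ref{0.1}), which gives $\square_1=e^{\rho\phi}\nabla_v\nabla_{\b{v}}=k+\rho-\nabla'^*\nabla'$, this yields
\[
k+\Delta'=2k+\rho-\square_1=m-\square_1,
\]
the last equality coming from $2k+\rho=m$ in (\ref{set1}). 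Since $\Delta'\geq 0$ and $k>0$, the operator $k+\Delta'=m-\square_1$ is invertible, so the assertion is equivalent, after applying $m-\square_1$ on the left, to the polynomial identity
\[
(m-\square_1)\sum_{n=0}^{N}a_n\square_n=1-a_N\square_{N+1}.
\]

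Next I would record that every $\square_n$ is a polynomial in $\square_1$: the factorization $\square_{N+1}=\prod_{n=0}^{N}(\square_1-A_n)$ from (\ref{1.4}) shows exactly this, so all the $\square_n$ commute with one another, and the recursion (\ref{2.1}) may be written $\square_1\square_n=\square_{n+1}+A_n\square_n$, valid also for $n=0$ since $A_0=0$. Substituting this relation and reindexing the $\square_{n+1}$ terms, the left-hand side of the displayed identity becomes
\[
(m-\square_1)\sum_{n=0}^{N}a_n\square_n=a_0 m\,\text{Id}+\sum_{n=1}^{N}\bigl(a_n(m-A_n)-a_{n-1}\bigr)\square_n-a_N\square_{N+1}.
\]
Matching coefficients against $1-a_N\square_{N+1}$, the identity holds precisely when $a_0 m=1$ and $a_n(m-A_n)=a_{n-1}$ for $1\leq n\leq N$, the top term $-a_N\square_{N+1}$ matching automatically.

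Finally I would verify that the closed form (\ref{defnan}) satisfies this two-term recursion. A one-line computation from (\ref{set1}) gives $m-A_n=\tfrac{1}{2}(n+2)(m+n\rho)$, while (\ref{defnan}) yields $a_0=1/m$ and $a_n/a_{n-1}=\tfrac{2}{(n+2)(m+n\rho)}=(m-A_n)^{-1}$, which is exactly what is needed. The hypothesis $m>(-\rho)N$ enters precisely here: since $\rho<0$, it guarantees $m+n\rho>0$ for all $0\leq n\leq N$, so every factor in the denominator of (\ref{defnan}) is nonzero and each $a_n$ is well-defined.

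I expect the only conceptually delicate point to be the reduction in the first paragraph: recognizing that $(k+\Delta')^{-1}=(m-\square_1)^{-1}$ on $(0,1)$-forms and that all the $\square_n$ are polynomials in $\square_1$ (hence mutually commuting), which is what makes the formal telescoping legitimate. Once this algebraic structure is in place, together with the self-adjointness already established in Lemma \ref{lemma3}, the remainder is a mechanical coefficient comparison and a routine check of the recursion for $a_n$; the main place where care is needed is the bookkeeping of the constant $\rho$ through the substitution $k=(m-\rho)/2$.
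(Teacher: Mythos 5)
Your proof is correct and follows essentially the same route as the paper: both identify $k+\Delta'=m-\square_{1}$ via (\ref{0.1}) and (\ref{set1}), and then verify the identity through the recursion (\ref{2.1}) together with the two-term recursion $a_0=1/m$, $(m-A_n)a_n=a_{n-1}$ for the coefficients (\ref{defnan}). The only (harmless) difference is that you multiply by $m-\square_{1}$ on the left where the paper multiplies on the right; you correctly justify this by noting that every $\square_{n}$ is a polynomial in $\square_{1}$, so all these operators commute.
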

\begin{proof}
By (\ref{0.1}) and (\ref{2.1}) we have
\begin{align}
k-\Delta'+\rho=k-\n'^*\n'+\rho=e^{\rho\phi}\n_{v}\n_{\b{v}}=\square_{1}.
\end{align}
Recalling the notation in  (\ref{set1}) this becomes
\begin{align}\label{2.18}
k+\Delta'=2k+\rho-\square_{1}=m-\square_{1}.
\end{align}
A direct computation using (\ref{2.1}) gives
\begin{align*}
&\quad (\sum_{n=0}^{N}a_n\square_{n})(k+\Delta')=\sum_{n=0}^{N}a_n\square_{n}(m-\square_{1})\\
&=\sum_{n=0}^{N} ma_n\square_{n}-\sum_{n=0}^{N}a_n(\square_{n+1}+A_n\square_{n})\\
&=ma_0+\sum_{n=1}^{N} \left((m-A_n)a_n-a_{n-1}\right)\square_{n}-a_N\square_{N+1}\\
&=1-a_N\square_{N+1},	
\end{align*}
where the last equality holds since $a_0=\frac{1}{m}$ and 
\begin{align*}
	(m-A_n)a_n-a_{n-1}=\frac{1}{2}(n+2)(m+\rho n)a_n-a_{n-1}=0.
\end{align*}
\end{proof}
Denote 
\begin{align}\label{2.3}
\|\o{\n}^n\mu\|^2:=\langle\n_{n\b{v}}\n_{(n-1)\b{v}}\cdots \n_{\b{v}}\mu,e^{n\rho\phi}\n_{n\b{v}}\n_{(n-1)\b{v}}\cdots \n_{\b{v}}\mu\rangle	
\end{align}
for $n\geq 1$, and $\|\o{\n}^0\mu\|^2:=\|\mu\|^2$. Then
\begin{lemma}\label{2.5} For $n\geq 0$ and $\mu=-\p_{\b{v}}(\phi_{\alpha\b{v}}(\phi_{v\b{v}})^{-1})\zeta^{\alpha} d\b{v}\otimes \frac{\p}{\p v}$ (see also (\ref{definition of mu})) we have the following identity
	\begin{align}
		\|\o{\n}^n\mu\|^2=(-\rho)^n\frac{n!(n+3)!}{3\cdot 2^{n+1}}\|\mu\|^2.
	\end{align}
\end{lemma}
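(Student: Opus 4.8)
The plan is to reduce the closed formula to the single one-step recursion
$\|\o{\n}^n\mu\|^2=-\tfrac{\rho}{2}n(n+3)\|\o{\n}^{n-1}\mu\|^2$ for $n\geq 1$, since telescoping this against $\|\o{\n}^0\mu\|^2=\|\mu\|^2$ produces
$\prod_{j=1}^n\big(-\tfrac{\rho}{2}\big)j(j+3)=(-\rho)^n\tfrac{n!(n+3)!}{3\cdot 2^{n+1}}$, which is exactly the asserted coefficient. Thus the entire content is the recursion coefficient, and I would extract it from a ladder (raising/lowering) structure on the fibre $\mc{X}_y$, in which $\mu$ plays the role of a lowest-weight vector.

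First I would trivialise the weights. Writing $f$ for the coefficient function of $\mu$ and setting $H_m:=e^{m\rho\phi}\,\n_{m\b{v}}\cdots\n_{\b{v}}f$, a direct computation with $\n_{l\b{v}}=\p_{\b{v}}+l\rho\phi_{\b{v}}$ collapses the definition into the single raising relation $H_m=\p_{\b{v}}(e^{\rho\phi}H_{m-1})=:\mc{E}H_{m-1}$ with $H_0=f$, and rewrites (\ref{2.3}) as $\|\o{\n}^m\mu\|^2=\int_{\mc{X}_y}|H_m|^2e^{-(m+1)\rho\phi}\sqrt{-1}dv\w d\b{v}$. Integration by parts (using $\phi_{v\b{v}}=e^{-\rho\phi}$) shows that the Hermitian adjoint of $\mc{E}$, from the $e^{-(m+1)\rho\phi}$-weighted inner product to the $e^{-m\rho\phi}$-weighted one, is $-\mc{D}_{m+1}$, where $\mc{D}_m:=\p_v-m\rho\phi_v$. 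The decisive algebraic fact is the ladder relation $\mc{D}_{m+1}\mc{E}=\mc{E}\mc{D}_m+(m+1)\rho$, which is a commutator identity of exactly the type of Lemma \ref{lemma1} (with $a=-m\rho$, $b=\rho$) and which uses the constant-scalar-curvature normalisation $e^{-\rho\phi}=\phi_{v\b{v}}$ in an essential way.

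Two inputs then finish the argument. The first is the lowest-weight identity $\mc{D}_1H_0=(\p_v-\rho\phi_v)f=0$, equivalently $\p_v(\phi_{v\b{v}}f)=0$; this is the harmonicity of the Kodaira--Spencer representative $\mu$. It is not automatic, and verifying it is the main obstacle: one first rewrites $\phi_{v\b{v}}f=-(\p_{\b{v}}+\rho\phi_{\b{v}})\phi_{\alpha\b{v}}\,\zeta^\alpha$ and then differentiates in $v$, the vanishing emerging only after substituting $\p_{\b{v}}\log\phi_{v\b{v}}=-\rho\phi_{\b{v}}$ and $\p_\alpha\log\phi_{v\b{v}}=-\rho\phi_\alpha$, so this is precisely the step where constant scalar curvature is used. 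The second input, proved by induction on $m$ from $\mc{D}_1H_0=0$ together with $\mc{D}_mH_{m-1}=\mc{D}_m\mc{E}H_{m-2}=\mc{E}\mc{D}_{m-1}H_{m-2}+m\rho H_{m-2}$, is the clean collapse $\mc{D}_mH_{m-1}=\tfrac{\rho}{2}(m-1)(m+2)\,H_{m-2}$. Granting these, I would compute $\|\o{\n}^n\mu\|^2=\langle\mc{E}H_{n-1},\mc{E}H_{n-1}\rangle=\langle H_{n-1},-\mc{D}_{n+1}\mc{E}H_{n-1}\rangle$, expand $-\mc{D}_{n+1}\mc{E}=-\mc{E}\mc{D}_n-(n+1)\rho$, and insert the collapse to get $-\mc{D}_{n+1}\mc{E}H_{n-1}=-\big(\tfrac{\rho}{2}(n-1)(n+2)+(n+1)\rho\big)H_{n-1}=-\tfrac{\rho}{2}n(n+3)H_{n-1}$. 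Hence $\|\o{\n}^n\mu\|^2=-\tfrac{\rho}{2}n(n+3)\|\o{\n}^{n-1}\mu\|^2$, and the telescoping of the first paragraph yields the lemma.
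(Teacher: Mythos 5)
Your proof is correct and is essentially the paper's argument: both integrate by parts once to bring in the lowering operator $\p_v-\rho\phi_v$ (adjoint of the raising operator), commute it through the $\n_{l\b v}$'s via the constant-curvature commutator identity of Lemma \ref{lemma1}, and kill the remaining term with the harmonicity $(\p_v-\rho\phi_v)\mu=0$ of (\ref{2.12}), arriving at the same one-step recursion $\|\o{\n}^n\mu\|^2=-\tfrac{\rho}{2}n(n+3)\|\o{\n}^{n-1}\mu\|^2$. The only difference is organizational: you package the commutations into the inductive collapse identity $\mc{D}_mH_{m-1}=\tfrac{\rho}{2}(m-1)(m+2)H_{m-2}$, whereas the paper commutes $D_{-v}$ all the way through in one display and sums the constants $(-\rho)(n+1+n+\cdots+2)$.
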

\begin{proof} We observe first that the adjoint 
$(e^{\rho \phi} \nabla_{\bar v})^* =-
e^{\rho \phi}( \partial_v -\rho \phi_v )
$, and  with some abuse of notation we introduce temperarily
$D_{-v}=
 \partial_v -\rho \phi_v $. Thus
	\begin{align*}
	\begin{split}
		\|\o{\n}^n\mu\|^2
		&=\langle\n_{n\b{v}}\n_{(n-1)\b{v}}\cdots \n_{\b{v}}\mu,e^{\rho\phi}\n_{\b{v}}e^{(n-1)\rho\phi}\n_{(n-1)\b{v}}\cdots \n_{\b{v}}\mu\rangle\\
		&=-\langle e^{\rho\phi} D_{-v}\n_{n\b{v}}\n_{(n-1)\b{v}}\cdots \n_{\b{v}}\mu,e^{(n-1)\rho\phi}\n_{(n-1)\b{v}}\cdots \n_{\b{v}}\mu\rangle.
		\end{split}
	\end{align*}
We  use Lemma \ref{lemma1} repeatedly
and find
	\begin{align}
	\begin{split}
\label{2.4}
		\|\o{\n}^n\mu\|^2
		&=(-\rho)(n+1)\langle\n_{(n-1)\b{v}}\cdots \n_{\b{v}}\mu,e^{(n-1)\rho\phi}\n_{(n-1)\b{v}}\cdots \n_{\b{v}}\mu\rangle\\
		&\quad -\langle e^{\rho\phi}\n_{n\b{v}}D_{-v}\n_{(n-1)\b{v}}\cdots \n_{\b{v}}\mu,e^{(n-1)\rho\phi}\n_{(n-1)\b{v}}\cdots \n_{\b{v}}\mu\rangle\\
		&=(-\rho)(n+1+n+\cdots+2)\|\o{\n}^{n-1}\mu\|^2\\
		&\quad-\langle e^{\rho\phi}\n_{n\b{v}}\n_{(n-1)\b{v}}\cdots
 \n_{\b{v}}D_{-v}\mu,e^{(n-1)\rho\phi}\n_{(n-1)\b{v}}\cdots \n_{\b{v}}\mu\rangle\\
		&=(-\rho)\frac{n(n+3)}{2}\|\o{\n}^{n-1}\mu\|^2,
	\end{split}
	\end{align}
where  the last equality holds by a direct checking and using (\ref{1.1}), namely
\begin{align}\label{2.12}
D_{-v}\mu=(\p_v-\rho\phi_v)\left(-\p_{\b{v}}(\phi_{\alpha\b{v}}(\phi_{v\b{v}})^{-1})\zeta^{\alpha}\right)=0.
\end{align}
Hence
\begin{align}
\|\o{\n}^n\mu\|^2=(-\rho)^n\frac{n!(n+3)!}{3\cdot 2^{n+1}}\|\mu\|^2.	
\end{align}
\end{proof}
We shall also
need the following elementary  identity
involving
the Gamma function $\Gamma(x)$.
\begin{lemma}\label{lemma2}
  Let  $0<a<\frac{1}{N+1}$. The following
  summation formula holds
\begin{align*}
\begin{split}
	&\quad\sum_{n=0}^N (-1)^nn!(n+3)\Gamma(\frac{1}{a}-n)
	=\frac{\Gamma(\frac{1}{a})(5a+3)}{(2a+1)(a+1)}\\
	&+\frac{(-1)^{N+1}(N+1)!\Gamma(\frac{1}{a}-(N+1))((N+6)a+(N+4))(a(N+1)-1)}{(2a+1)(a+1)}.
	\end{split}
\end{align*}	
\end{lemma}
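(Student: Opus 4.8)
The plan is to prove the identity by induction on $N$, treating $a$ as a parameter subject to $0<a<\tfrac{1}{N+1}$. This constraint guarantees that $\tfrac1a-n>0$ for every $0\le n\le N+1$, so all the values $\Gamma(\tfrac1a-n)$ lie to the right of the poles of $\Gamma$ and the recurrence $\Gamma(x)=(x-1)\Gamma(x-1)$ may be applied freely throughout. Write $S_N:=\sum_{n=0}^N(-1)^n n!(n+3)\Gamma(\tfrac1a-n)$ for the left-hand side. The very shape of the right-hand side, namely an $N$-independent constant $\tfrac{\Gamma(1/a)(5a+3)}{(2a+1)(a+1)}$ plus a single boundary term carrying all the $N$-dependence, is exactly what one expects from a telescoping (summation-by-parts) phenomenon, and this is what makes the induction close cleanly.

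For the base case $N=0$ the left-hand side is simply $3\Gamma(\tfrac1a)$. On the right-hand side I would use $\Gamma(\tfrac1a-1)=\tfrac{a}{1-a}\Gamma(\tfrac1a)$ to eliminate the shifted Gamma value; after collecting terms the numerator becomes $3(2a^2+3a+1)\Gamma(\tfrac1a)=3(2a+1)(a+1)\Gamma(\tfrac1a)$, and the denominator $(2a+1)(a+1)$ cancels, leaving $3\Gamma(\tfrac1a)$. This matches the left-hand side.

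For the inductive step I would pass from $N$ to $N+1$ by adding the single new term $(-1)^{N+1}(N+1)!\,(N+4)\,\Gamma(\tfrac1a-(N+1))$ to $S_N$ and substituting the inductive hypothesis for $S_N$. The constant summand is the same at levels $N$ and $N+1$, so it cancels, and the task reduces to showing that the old boundary term plus the new summand equals the new boundary term. To compare the two boundary terms I would rewrite the $\Gamma(\tfrac1a-(N+2))$ appearing at level $N+1$ via $\Gamma(\tfrac1a-(N+2))=\tfrac{a}{1-a(N+2)}\Gamma(\tfrac1a-(N+1))$, noting that the factor $a(N+2)-1$ in the boundary term cancels the denominator $1-a(N+2)$ up to sign. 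After factoring out the common quantity $(-1)^{N+1}(N+1)!\,\Gamma(\tfrac1a-(N+1))$ from every term, the entire step collapses to the purely polynomial identity
\[
\bigl((N+6)a+(N+4)\bigr)\bigl(a(N+1)-1\bigr)+(N+4)(2a+1)(a+1)=(N+2)\,a\,\bigl((N+7)a+(N+5)\bigr),
\]
which I would verify by expanding both sides: they agree coefficientwise as quadratics in $a$ (the $a^2$ coefficients are both $N^2+9N+14=(N+2)(N+7)$, the $a$ coefficients are both $N^2+7N+10=(N+2)(N+5)$, and the constant terms both vanish).

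The computation is entirely elementary, so there is no genuine analytic obstacle; the only place where care is needed is the bookkeeping of signs and factorials under the shift $N\mapsto N+1$, in particular the cancellation $\tfrac{a(N+2)-1}{1-a(N+2)}=-1$ and the splitting $(N+2)!=(N+2)(N+1)!$. Once the algebra is arranged so that the common Gamma factor and the $(2a+1)(a+1)$ denominator are cleared, what remains is the single polynomial identity above, and the induction closes.
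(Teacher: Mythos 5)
Your proof is correct and is essentially the paper's argument recast as an induction rather than a telescoping sum: the paper multiplies the sum by $(2a+1)(a+1)$ and telescopes via the identity $(2a+1)(a+1)(n+3)=-((n+5)a+(n+3))(an-1)-((n+6)a+(n+4))(a(n+1)-1)\frac{n+1}{\frac{1}{a}-(n+1)}$, which at $n=N+1$ is precisely the quadratic identity in $a$ that closes your inductive step. Both routes reduce to the same polynomial verification, and your base case and coefficient checks are accurate.
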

\begin{proof}
  This is simply a consequence of the following
  identity
	\begin{align*}
	\begin{split}
	&\quad(2a+1)(a+1)(n+3)\\
	&=-((n+5)a+(n+3))(an-1)-((n+6)a+(n+4))(a(n+1)-1)\frac{n+1}{\frac{1}{a}-(n+1)}.	
	\end{split}
	\end{align*}
Indeed
\begin{align*}
\begin{split}
	&\quad (2a+1)(a+1)\sum_{n=0}^N (-1)^nn!(n+3)\Gamma(\frac{1}{a}-n)\\
	&=\sum_{n=0}^N -(-1)^nn!\Gamma(\frac{1}{a}-n)((n+5)a+(n+3))(an-1)\\
	&\quad+\sum_{n=0}^N (-1)^{n+1}n!\Gamma(\frac{1}{a}-n)((n+6)a+(n+4))(a(n+1)-1)\frac{n+1}{\frac{1}{a}-(n+1)}\\
	&=-\sum_{n=0}^N (-1)^nn!\Gamma(\frac{1}{a}-n)((n+5)a+(n+3))(an-1)\\
	&\quad+\sum_{n=0}^N (-1)^{n+1}(n+1)!\Gamma(\frac{1}{a}-(n+1))((n+6)a+(n+4))(a(n+1)-1)\\
	&=\Gamma(\frac{1}{a})(5a+3)\\
	&\quad+(-1)^{N+1}(N+1)!\Gamma(\frac{1}{a}-(N+1))((N+6)a+(N+4))(a(N+1)-1),\\
\end{split}	
\end{align*}
completing  the proof.
\end{proof}

We return now to (\ref{1.2}).
From (\ref{rho}) we have
$(-\rho)c_1(L|_{\mc{X}_y})=c_1(K_{\mc{X}_y}),
$
and then $\rho<0$ since both $L|_{\mc{X}_y}$ and $K_{\mc{X}_y}$ are ample. Let $h$ be a smooth metric on $K_{\mc{X}_y}$ such that its curvature $R(h)=\sqrt{-1}(-\rho)\phi_{v\b{v}}dv\wedge d\b{v}$, which gives a K\"ahler metric on $\mc{X}_y$. Moreover, by a direct calculation, its scalar curvature is $-1$.
Using (\ref{metric consist}) and
 Theorem \ref{TYZ} we get the following TYZ expansion associated with the line bundle $(L^k+K_{\mc{X}/\mc{T}})|_{\mc{X}_y}$,
\begin{align}\label{2.6}
\begin{split}
\sum_{j=1}^{d_k}|u_j|^2e^{-k\phi}&=(-\rho)\left((-\frac{k}{\rho}+1)+\frac{-1}{2}+O\left(e^{-\frac{(\log (-\frac{k}{\rho}+1))^2}{8}}\right)\right)\frac{\omega|_{\mc{X}_y}}{2\pi}\\
&=\left(k+\frac{-\rho}{2}+O\left(e^{-\frac{(\log (-\frac{k}{\rho}+1))^2}{8}}\right)\right)\frac{\omega|_{\mc{X}_y}}{2\pi}.
\end{split}
\end{align}

Now we prove Theorem  \ref{main theorem 1}.
\begin{proof}
 For any $l\geq 0$, we fix an integer $N\in \mb{N}_+$ such that
$
\frac{N-1}{2}>l,	
$
and  take $m$ large such that $m>(-\rho)(N+1)$.

The second term in the RHS of (\ref{1.2})
is by Lemma \ref{2.2} 
\begin{align}\label{2.19}
\begin{split}
\sum_{j=1}^{d_k}k\langle (k+\Delta')^{-1}i_{\mu}u_j,i_{\mu}u_j\rangle
=&\sum_{j=1}^{d_k}k\langle \sum_{n=0}^{N}a_n\square_{n} i_{\mu}u_j,i_{\mu}u_j\rangle\\
&+\sum_{j=1}^{d_k}k\langle (k+\Delta')^{-1}a_N\square_{N+1} i_{\mu}u_j,i_{\mu}u_j\rangle.
\end{split}
\end{align}

We compute the first term in the RHS of the above formula as
\begin{align}\label{3.0-}
	\begin{split}
		&\quad\sum_{j=1}^{d_k}k\langle \sum_{n=0}^{N}a_n\square_{n} i_{\mu}u_j,i_{\mu}u_j\rangle\\
		&=\sum_{n=0}^{N}\sum_{j=1}^{d_k}k a_n(-1)^n\langle i_{\n_{n\b{v}}\cdots\n_{\b{v}}\mu}u_j, (e^{\rho\phi}\n_{\b{v}})^n i_{\mu}u_j\rangle\\
		&=\sum_{n=0}^{N}\sum_{j=1}^{d_k}k a_n(-1)^n\langle i_{\n_{n\b{v}}\cdots\n_{\b{v}}\mu}u_j, e^{n\rho\phi}i_{\n_{n\b{v}}\cdots\n_{\b{v}}\mu}u_j\rangle\\
		&=\sum_{n=0}^{N}\sum_{j=1}^{d_k}k a_n(-1)^n\int_{\mc{X}/\mc{T}}|\o{\n}^n\mu|^2|u_j|^2e^{-k\phi},
	\end{split}
\end{align}
where we use (\ref{set2}) and Stoke's theorem
in the first equality,
 a direct computation and (\ref{notation1})
in the second equality.
This combined with (\ref{2.6}) gives
\begin{align}\label{3.0}
	\begin{split}
		&\quad\sum_{j=1}^{d_k}k\langle \sum_{n=0}^{N}a_n\square_{n} i_{\mu}u_j,i_{\mu}u_j\rangle\\
				&=\sum_{n=0}^{N}k a_n(-1)^n\|\o{\n}^n\mu\|^2\left(k+\frac{-\rho}{2}+O\left(e^{-\frac{(\log (-\frac{k}{\rho}+1))^2}{8}}\right)\right)\frac{1}{2\pi}.
	\end{split}
\end{align}
Disregarding the remainder terms and using Lemma \ref{2.5} and
(\ref{defnan})
we get
\begin{align}\label{2.7}
\begin{split}
	&\quad\sum_{n=0}^{N}k a_n(-1)^n\frac{\|\o{\n}^n\mu\|^2}{\|\mu\|^2}\frac{1}{2\pi}(k+\frac{-\rho}{2})\\
	&=\sum_{n=0}^{N}k\frac{2^{n+1}(-1)^n}{(n+2)!(m+\rho n)\cdots(m+\rho)m}\cdot (-\rho)^n\frac{n!(n+3)!}{3\cdot 2^{n+1}}\frac{1}{2\pi}(k+\frac{-\rho}{2})\\
	&=\frac{m}{24\pi}\sum_{n=0}^N (-1)^n(\frac{-\rho}{m})^n\frac{n!(n+3)}{(1+\frac{\rho}{m}n)\cdots(1+\frac{\rho}{m})}(1+\frac{-2\rho}{m})(1+\frac{-\rho}{m})\\
	&=\frac{m}{24\pi}(1+\frac{-\rho}{m})(1+\frac{-2\rho}{m})\frac{1}{\Gamma(\frac{m}{-\rho})}\sum_{n=0}^N (-1)^nn!(n+3)\Gamma(\frac{m}{-\rho}-n).
\end{split}
\end{align}
From Lemma \ref{lemma2} and noting $\frac{m}{-\rho}>N+1$, the above
 can be  written as
\begin{align}\label{2.22}
\begin{split}
	&\quad \sum_{n=0}^{N}k a_n(-1)^n\frac{\|\o{\n}^n\mu\|^2}{\|\mu\|^2}\frac{1}{2\pi}(k+\frac{-\rho}{2})=\frac{m}{24\pi}(3+5\frac{-\rho}{m})\\
	&+\frac{m}{24\pi}(-1)^{N+1}(N+1)!\frac{((N+4)+(N+6)(\frac{-\rho}{m}))(\frac{-\rho}{m}(N+1)-1)}{(1+\frac{\rho}{m})\cdots(1+(N+1)\frac{\rho}{m})}(\frac{-\rho}{m})^{N+1}\\
	&=\frac{m}{24\pi}(3+5\frac{-\rho}{m})+O(\frac{1}{m^{N}})\\
 &=\frac{3k-\rho}{12\pi}+O(\frac{1}{k^{N}}).
		\end{split}
\end{align}
On the other hand, it is easy to see that
\begin{align}\label{2.21}
	\sum_{n=0}^{N}k a_n(-1)^n\|\o{\n}^n\mu\|^2O\left(e^{-\frac{(\log (-\frac{k}{\rho}+1))^2}{8}}\right)\frac{1}{2\pi}=o(\frac{1}{m^N})=o(\frac{1}{k^N}).
\end{align}
Substituting (\ref{2.22}) and (\ref{2.21}) into (\ref{3.0}), we get
\begin{align}\label{2.23}
	\sum_{j=1}^{d_k}k\langle \sum_{n=0}^{N}a_n\square_{n} i_{\mu}u_j,i_{\mu}u_j\rangle=\frac{3k-\rho}{12\pi}\|\mu\|^2+O(\frac{1}{k^{N}}).
\end{align}

For the second term in the RHS of (\ref{2.19}) we
use Cauchy-Schwarz inequality,
\begin{align}\label{2.24}
\begin{split}
	&\quad\sum_{j=1}^{d_k}k\langle (k+\Delta')^{-1}a_N\square_{N+1} i_{\mu}u_j,i_{\mu}u_j\rangle\\
	&\leq \sum_{j=1}^{d_k}k|a_N|\|(k+\Delta')^{-1}\square_{N+1} i_{\mu}u_j\|\|i_{\mu}u_j\|\\
	&\leq \sum_{j=1}^{d_k}|a_N|\|\square_{N+1} i_{\mu}u_j\|\|i_{\mu}u_j\|,
	\end{split}	
\end{align}
where the second inequality holds because the eigenvalues of $k+\Delta'$ are greater
than or equal to $k$. By Lemma \ref{lemma3}, $\square_{n+1}$ is
self-adjoint, so that
\begin{align}\label{1.6}
\begin{split}
	\|\square_{N+1} i_{\mu}u_j\|&=|\langle(\square_{N+1})^2i_{\mu}u_j,i_{\mu}u_j\rangle|^{1/2}\\
	&=\left|\sum_{n=N+1}^{2N+2}b_n\langle \square_{n}i_{\mu}u_j,i_{\mu}u_j\rangle\right|^{1/2}\\
	&=\left|\sum_{n=N+1}^{2N+2}b_n(-1)^n\int_{\mc{X}/\mc{T}}|\o{\n}^n\mu|^2|u_j|^2 e^{-k\phi}\right|^{1/2}\\
	&\leq \left(\sum_{n=N+1}^{2N+2}|b_n|\max_{\mc{X}_y}|\o{\n}^n\mu|^2\|u_j\|^2\right)^{1/2}=O\left(k^{\frac{N+1}{2}}\right),
\end{split}	
\end{align}
where the second equality follows from Lemma \ref{lemma3}, the third
equality holds by the same proof as (\ref{3.0}), the last equality
holds since $|b_n|=O(k^{N+1})$, $\|u_j\|=1$ and
$\max_{\mc{X}_y}|\o{\n}^n\mu|^2$ is independent of $k$. Substituting
(\ref{1.6}) into (\ref{2.24})
gives 
\begin{align}
\begin{split}
	&\quad\sum_{j=1}^{d_k}k\langle (k+\Delta')^{-1}a_N\square_{N+1} i_{\mu}u_j,i_{\mu}u_j\rangle\\
		&\leq \sum_{j=1}^{d_k}|a_N|\|i_{\mu}u_j\|O\left(k^{\frac{N+1}{2}}\right)\leq |a_N|d_k\max_{\mc{X}_y}|\mu|^2O\left(k^{\frac{N+1}{2}}\right)\\
		&=O\left(\frac{1}{k^{\frac{N-1}{2}}}\right)
	\end{split}	
\end{align}
where the last equality follows from
$|a_N|=O\left(\frac{1}{k^{N+1}}\right)$, $d_k=O(k)$, and
$\max_{\mc{X}_y}|\mu|^2$ is independent of $k$.
This settles the second term in the RHS of (\ref{2.19}).

Combining the above estimate with   (\ref{2.23})
we obtain
the estimates for  (\ref{2.19}),
\begin{align}\label{2.10}
	\sum_{j=1}^{d_k}k\langle (k+\Delta')^{-1}i_{\mu}u_j,i_{\mu}u_j\rangle=\frac{3k-\rho}{12\pi}\|\mu\|^2+O\left(\frac{1}{k^{\frac{N-1}{2}}}\right).
\end{align}

On the other hand by \cite[(3.46)]{Wan} and (\ref{dec}) we have
\begin{align}
\begin{split}
\Delta(c(\phi)(\zeta,\b{\zeta}))&=\phi_{v\b{v}}^{-1}\frac{\p^2}{\p v\p\b{v}}(c(\phi)_{\alpha\b{\beta}})	\zeta^{\alpha}\b{\zeta}^{\beta}\\
&=(\p\b{\p}\log \phi_{v\b{v}})(\zeta^{\alpha}\frac{\delta}{\delta z^{\alpha}},\o{\zeta^{\beta}\frac{\delta}{\delta z^{\beta}}})-|\mu|^2\\
&=(-\rho)\p\b{\p}\phi(\zeta^{\alpha}\frac{\delta}{\delta z^{\alpha}},\o{\zeta^{\beta}\frac{\delta}{\delta z^{\beta}}})-|\mu|^2\\
&=(-\rho)(-\sqrt{-1})c(\phi)(\zeta,\b{\zeta})-|\mu|^2.
\end{split}
\end{align}
By integrating along fibers and using Stoke's theorem, we get
\begin{align}\label{2.17}
(-\sqrt{-1})\left(\int_{\mc{X}_y}c(\phi)\omega\right)(\zeta,\b{\zeta})=\frac{1}{-\rho}\|\mu\|^2.	
\end{align}

From above equality, the first term in the RHS of (\ref{1.2}) is 
\begin{align}\label{2.11}
\begin{split}
	&\quad -\frac{\sqrt{-1}}{2\pi}\left(\int_{\mc{X}_y}kc(\phi)\sum_{j=1}^{d_k}|u_j|^2e^{-k\phi}\right)(\zeta,\b{\zeta})\\
	&=-\frac{\sqrt{-1}}{2\pi}\left(\int_{\mc{X}_y}c(\phi)\omega\right)(\zeta,\b{\zeta})k \frac{1}{2\pi}\left(\frac{k}{\rho^2}+\frac{1}{2(-\rho)}+O\left(e^{-\frac{(\log (-\frac{k}{\rho}+1))^2}{8}}\right)\right)\\
	&=\frac{1}{2\pi}\frac{\|\mu\|^2}{-\rho}k \frac{1}{2\pi}\left(k+\frac{-\rho}{2}+O\left(e^{-\frac{(\log (-\frac{k}{\rho}+1))^2}{8}}\right)\right)\\
	&=\frac{k\rho-2k^2}{8\pi^2\rho}\|\mu\|^2+O\left(ke^{-\frac{(\log (-\frac{k}{\rho}+1))^2}{8}}\right).
	\end{split}
\end{align}
Finally using  (\ref{2.10}) and (\ref{2.11}) we get the an expansion
for (\ref{1.2}) as
\begin{align}\label{asymptotic 1}
\begin{split}
	&\quad(-\sqrt{-1})c_1(E^k, \|\bullet\|_k)(\zeta,\b{\zeta})\\
	&=\frac{3k-\rho}{24\pi^2}\|\mu\|^2+O(\frac{1}{k^{\frac{N-1}{2}}})+\frac{k\rho-2k^2}{8\pi^2\rho}\|\mu\|^2+O\left(ke^{-\frac{(\log (-\frac{k}{\rho}+1))^2}{8}}\right)\\
	&=\frac{6k^2-6k\rho+\rho^2}{24\pi^2(-\rho)}\|\mu\|^2+O\left(\frac{1}{k^{\frac{N-1}{2}}}\right).
	\end{split}
\end{align}
This
completes the proof of
Theorem 
\ref{main theorem 1}
since $\frac{N-1}{2}>l$.
\end{proof}

\subsection{The curvature of Quillen metric} We prove now Theorem \ref{main theorem 2}.
\begin{proof}
From \cite[Proposition 3.9]{Wan}, the curvature of Quillen metric is 
\begin{align}\label{asymptotic 2}
\begin{split}
&\quad (\sqrt{-1}c_1(\lambda,\|\bullet\|_Q))(\zeta,\b{\zeta})\\
&=\frac{k^{2}}{(2\pi)^2}\int_{\mc{X}_y}(-\sqrt{-1})c(\phi)(\zeta,\b{\zeta})\omega+\frac{k}{(2\pi)^2}\int_{\mc{X}_y}\left(\frac{1}{2}|\mu|^2-\frac{\rho}{2}(-\sqrt{-1})c(\phi)(\zeta,\b{\zeta})\right)\omega\\
	&\quad+\frac{1}{(2\pi)^{2}}\int_{\mc{X}_y}\left((-\sqrt{-1})c(\phi)(\zeta,\b{\zeta})\left(-\frac{1}{6}\Delta\rho+\frac{1}{24}(|R|^2-4|Ric|^2+3\rho^2\right)-\frac{\rho}{4}|\mu|^2\right)\omega\\
	&\quad +\frac{1}{(2\pi)^{2}}\left(\frac{1}{12}\|\mu\|^2_{Ric}+\frac{1}{12}\|\n'\mu\|^2-\frac{1}{4}\|\b{\p}^*\mu\|^2\right),
\end{split}
\end{align}
where $\|\mu\|^2_{Ric}:=\int_{\mc{X}_y}(\mu^i_{\b{j}}\o{\mu^s_{\b{t}}}R_{i\b{l}}\phi^{\b{l}k}\phi^{\b{j}t}\phi_{k\b{s}})\frac{\omega^n}{n!}$.

The Chern curvature tensor $R$ is given by  
\begin{align}
R_{v\b{v}v\b{v}}=-\p_v\p_{\b{v}}\phi_{v\b{v}}+\phi_{v\b{v}}^{-1}\p_v\phi_{v\b{v}}\p_{\b{v}}\phi_{v\b{v}}=\rho(\phi_{v\b{v}})^2,
\end{align}
and then
\begin{align}\label{2.13}
|R|^2=|R_{v\b{v}v\b{v}}|^2(\phi_{v\b{v}})^{-4}=\rho^2,\quad Ric_{v\b{v}}=(\phi_{v\b{v}})^{-1}R_{v\b{v}v\b{v}}=\rho\phi_{v\b{v}}.
\end{align}
Hence
\begin{align}\label{2.14}
\|\mu\|^2_{Ric}=\rho\|\mu\|^2, \quad |Ric|^2=\rho^2.	
\end{align}
Using  (\ref{2.12}) we have
\begin{align}\label{2.15}
\n'\mu=(\p_v-\rho\phi_v)\mu dv=D_{-v}\mu dv=0,
\end{align}
consequently
\begin{align}\label{2.16}
\b{\p}^*\mu=-\sqrt{-1}\Lambda\n'\mu=0.	
\end{align}

Substituting (\ref{2.13}), (\ref{2.14}), (\ref{2.15}) and (\ref{2.16})
into (\ref{asymptotic 2}),
and using (\ref{2.17}), we obtain
\begin{align}
(\sqrt{-1}c_1(\lambda,\|\bullet\|_Q))(\zeta,\b{\zeta})=\frac{6k^2-6k\rho+\rho^2}{24\pi^2(-\rho)}\|\mu\|^2,
\end{align}
completing the proof of
Theorem \ref{main theorem 2}.
\end{proof}

\subsection{The proof of Theorem \ref{main theorem}}

\begin{proof}
  We estimate (\ref{Torsion})
using (\ref{asymptotic 1}) and (\ref{asymptotic 2}), 
\begin{align*}
\begin{split}
\frac{1}{2\pi}\p\b{\p}\log(\tau_k(\b{\p}))^2(\zeta,\b{\zeta})
&=(-\sqrt{-1})(-c_1(\lambda,\|\bullet\|_Q)-c_1(E^k, \|\bullet\|_k))(\zeta,\b{\zeta})\\
&=O\left(\frac{1}{k^{\frac{N-1}{2}}}\right)=o(k^{-l}).\\
\end{split}
\end{align*}
\end{proof}

\end{document}